\newcolumntype{b}{X}
\newcolumntype{s}{>{\hsize=.5\hsize}X}
\newtheorem{theorem}{Theorem} %[section]
\newtheorem{definition}[theorem]{Definition}
\theoremstyle{definition}
\newtheorem{example}{Example}[section]
\newtheorem{remark}{Remark}[section]
\newcommand{\ba}{\begin{align}}
\newcommand{\ea}{\end{align}}  %WTH is wrong with this??????
\newcommand{\be}{\begin{equation}}
\newcommand{\ee}{\end{equation}}
\newcommand{\bea}{\begin{eqnarray}}
\newcommand{\eea}{\end{eqnarray}}
\newcommand{\barr}{\begin{array}}
\newcommand{\earr}{\end{array}}
\newcommand{\bn}{\begin{enumerate}}
\newcommand{\en}{\end{enumerate}}
\newcommand{\bi}{\begin{itemize}}
\newcommand{\ei}{\end{itemize}}
\newcommand{\bbbm}{\begin{pmatrix}}
\newcommand{\eeem}{\end{pmatrix}}
\newcommand{\cC}{{\cal C}}
\newcommand{\cE}{{\cal E}}
\newcommand{\cF}{{\cal F}}
\newcommand{\cG}{{\cal G}}
\newcommand{\cI}{{\cal I}}
\newcommand{\cN}{{\cal N}}
\newcommand{\cP}{{\cal P}}
\newcommand{\cV}{{\cal V}}
\newcommand{\E}{{\mathbb E}}
\newcommand{\N}{{\mathbb N}}
\newcommand{\R}{{\mathbb R}}
\newcommand{\al}{\alpha}
\newcommand{\la}{\lambda}
\newcommand{\La}{\Lambda}
\newcommand{\vp}{\varphi}
\newcommand{\ignore}[1]{}{}
\newcommand{\noin}{\noindent}
\newcommand{\nn}{\nonumber}
\newcommand{\p}{{\partial}}
\newcommand{\q}{\quad}
\newcommand{{\QED}}{{\hfill QED} \bigskip}
\renewcommand{\subset}{\subseteq}
\newcommand{\cal}{\mathcal}
\newcommand{\es}{\emptyset}
\definecolor{darkspringgreen}{rgb}{0.09, 0.45, 0.27} 
\definecolor{darkgray}{rgb}{0.66, 0.66, 0.66}
\numberwithin{equation}{section}
\numberwithin{theorem}{section}
\tikzset{
    partial ellipse/.style args={#1:#2:#3}{
        insert path={+ (#1:#3) arc (#1:#2:#3)}
    }
}
\begin{document}
%\runningtitle{AD}
\title[Cooperative networks and Hodge-Shapley value]
{Cooperative networks and Hodge-Shapley value
%$f$-Shapley value on general cooperative networks
%Generalization of the Shapley value via random cooperative processes and edge flows on graphs
%$f$-Shapley value on multigraphs
%\small Stochastic integral representation of solutions to Hodge theoretic Poisson's equations on Graphs, and cooperative value allocation of Shapley and Nash
%Hodge Theory, Stochastic Path Integration, and Value Allocation Theory of L. Shapley and J. Nash 
%Poisson's equation, Stochastic Path Integral, and Value Allocation Theory of L.~Shapley and J.~Nash
%Hodge allocation for cooperative rewards
%Generalized Shapley axioms and its dynamic interpretation
%Generalized Shapley's cooperative game theory %and its dynamic interpretation
%Hodge allocation for cooperative rewards:\\ a generalization of Shapley's cooperative value allocation theory via Hodge theory on graphs
% to general cooperative network
%An extension of Shapley's cooperative value allocation theory
}
\thanks{
\em {I express my sincere gratitude for the feedback provided by three anonymous referees, as well as the insightful contributions from participants in numerous seminars and conferences regarding this paper. Special thanks are extended to Benjamin Golub, David Miller, Ari Stern, Erhan Bayraktar, and Ibrahim Ekren for their invaluable comments and support.}}

\date{\today}

\author{Tongseok Lim}
\address{Tongseok Lim: Mitchell E. Daniels, Jr. School of Business
\newline  Purdue University, West Lafayette, Indiana 47907, USA}
\email{lim336@purdue.edu}

\begin{abstract} 
Lloyd Shapley's cooperative value allocation theory stands as a central concept in game theory, extensively utilized across various domains to distribute resources, evaluate individual contributions, and ensure fairness. The Shapley value formula and his four axioms that characterize it form the foundation of the theory. 

Traditionally, the Shapley value is assigned under the assumption that all players in a cooperative game will ultimately form the grand coalition. In this paper, we reinterpret the Shapley value as an expectation of a certain stochastic path integral, with each path representing a general coalition formation process. As a result, the value allocation is naturally extended to all partial coalition states. In addition, we provide a set of five properties that extend the Shapley axioms and characterize the stochastic path integral. Finally, by integrating Hodge calculus, stochastic processes, and path integration of edge flows on graphs, we expand the cooperative value allocation theory beyond the standard coalition game structure to encompass a broader range of cooperative network configurations.

\end{abstract}

\maketitle

\noindent\emph{Keywords: Cooperative game, Shapley axiom, Shapley value, Shapley formula, Hodge theory, Poisson's equation, edge flow, path integral, random coalition formation process
}

\noindent\emph{JEL classification:
C71. MSC2020 Classification: 91A12, 05C57, 60J20, 68R01}

%\begingroup
%\hypersetup{hidelinks}
%\tableofcontents
%\endgroup

\section{Introduction}

Lloyd Shapley's value allocation theory for cooperative games has been one of the most central concepts in game theory.  Shapley value is widely used in many fields, including economics, finance, and machine learning, to allocate resources, assess individual agent contributions, and determine the fairness of payouts. Among many excellent treatises on Shapley value, we refer to a recent treatise by \citet{algaba2019handbook}, in which various authors discuss modern applications of the Shapley value to various game-theoretic and operations-research problems including genetics, social choice and social network, finance, politics, tax games, telecommunication and energy transmission networks, queueing problems,  group decision making, spanning trees, and even aircraft landing fees problem. Recently, researchers have started to utilize the Shapley value in diverse fields such as machine learning \citep{ghorbani2019data, mitchell2022sampling, schoch2022cs, rozemberczki2022shapley}, medicine \citep{rodriguez2019interpretation, smith2021identifying}, and sustainable energy \citep{pang2021correlation}. This shows that Shapley's cooperative value allocation theory remains a vibrant area of research, applied across various contexts, and continues to inspire researchers.

The Shapley value possesses two notable facets: Shapley's renowned value allocation formula and his four defining axioms. These axioms are significant as they establish a framework of fairness criteria for evaluating the worth of a cooperative game and the individual contributions of players. By adhering to these axioms, the distribution of value within a game is perceived to be fair, transparent, and intuitively sensible. Given that the Shapley value emerges as the unique outcome that satisfies these axioms, it emerges as a compelling solution for value allocation in cooperative games. Furthermore, the Shapley formula holds significance as it provides a mathematical method for computing the value of a cooperative game and the contributions of individual players. This formula determines each player's marginal contribution to the overall game value by considering all potential permutations of player coalitions. With a range of desirable properties inherited from the Shapley axioms, the Shapley formula is widely embraced as the preferred approach for evaluating individual player performance and distributing the value of a cooperative game.

In a coalitional TU game $v$, the Shapley value can be assigned under the assumption that all players will eventually form the grand coalition, and the Shapley axioms and formula are then used to determine a fair allocation. In other words, Shapley value does not address how to properly assess individual player contribution and allocate the value of a cooperative game when the players form any non-grand, but partial coalition state. While the Shapley value applied  for each subgame of $v$ could allocate the value $v(T)$ assigned to the coalition $T \subsetneq N$, the formula implicitly assumes that the coalition only grows towards the target $T$, thereby failing to fully capture the entire game structure. 

The objective of this study is to extend the Shapley value allocation theory to a broader cooperative framework. Specifically, applying this new theory to a coalitional TU game enables the generation of value allocations at each partial coalition state, where the allocation at the grand coalition coincides with the Shapley value.

To comprehensively depict the evolution of partial coalitions, we consider a coalition formation process where players can both join and depart from existing coalitions until the desired coalition is achieved. This approach offers a more realistic portrayal of coalition formation processes and serves as the inspiration for the average path integral value allocation formula, which extends the Shapley formula. 

Furthermore, as we develop the new allocation theory using Hodge calculus, stochastic processes, and path integration on graphs, it becomes apparent that this generalization should not be confined to the coalition game graph framework. Consequently, we introduce the Hodge-Shapley value, a versatile allocation scheme applicable to any cooperative network, providing allocation values at any stage of cooperation. Finally, we demonstrate how the Hodge-Shapley value can be effectively computed on any graph, assuming that the underlying cooperative process adheres to a natural property.

The structure of this paper is outlined as follows. In Section \ref{Shapleyreview}, we provide a review of Shapley's cooperative value allocation theory. Section \ref{newformula} introduces our extension of the Shapley formula via the random coalition process. In Section \ref{newaxiom}, we present five properties characterizing the extended value allocation formula. Section \ref{valuegeneralized} introduces the Hodge-Shapley value by utilizing arbitrary edge flows as players' marginal value. Section \ref{Hodgeallocation1} extends our value allocation framework to any cooperative network. In Section \ref{AvePoisson}, we explain how Hodge calculus can be applied to compute the new value allocation. Section \ref{Conclusion} provides concluding remarks, while Section \ref{proofs} offers additional proofs.

\section{Review of Shapley axioms and the Shapley formula}\label{Shapleyreview}
We commence by revisiting the renowned Shapley value allocation theory (\citet{shapley1953value}), which remains a source of inspiration for researchers across diverse fields.

To begin, let $N = \{1,2,...,n\}$ represent the set of players of the coalition games
\be
\cG_N = \{ v : 2^{N} \to \R \ | \ v(\emptyset)=0 \}. \nn
\ee
A coalition game (also called a characteristic function) $v$ is a function on the subsets of $N$, where each  $S \subset N$ represents a coalition of players in $S$, and $v(S)$ represents the value assigned to the coalition $S$,  with the null coalition $\emptyset$ receiving zero value. Given $v \in \cG_N$, Shapley considered the question of how to split
the grand coalition value $v(N) $, known as the Shapley value. It is determined uniquely by the following result. 

\begin{theorem}[\citet{shapley1953value}]
  \label{thm:shapley}
  There exists a unique allocation
  $ v \in \cG_{N} \mapsto \bigl( \phi _i (v) \bigr) _{ i \in N } $ satisfying the following conditions:
\vspace{1mm}
  
\noin{\rm {\boldmath$\cdot$} efficiency:} $ \sum _{ i \in N } \phi _i (v) = v (N) $.
\vspace{1mm}

\noin{\rm {\boldmath$\cdot$} symmetry:} 
    $ v \bigl( S \cup \{ i \} \bigr) = v \bigl( S \cup \{ j \} \bigr)
    $ for all $ S \subset N \setminus \{ i, j \} $ yields
    $ \phi _i (v) = \phi _j (v) $.
\vspace{1mm}

\noin{\rm {\boldmath$\cdot$} null-player:} 
    $ v \bigl( S \cup \{ i \} \bigr) - v (S) = 0 $ for all
    $ S \subset N \setminus \{ i \} $ yields $ \phi _i (v) = 0 $.
\vspace{1mm}

\noin{\rm {\boldmath$\cdot$} linearity:} 
    $ \phi _i ( \alpha v + \alpha ^\prime v ^\prime ) = \alpha \phi _i
    (v) + \alpha ^\prime \phi _i ( v ^\prime ) $ for all
    $ \alpha, \alpha ^\prime \in \mathbb{R} $ and $v,v' \in \cG_{N}$.
\vspace{1mm}

  Moreover, this allocation is given by the following explicit formula:
  \begin{equation}
    \label{eqn:shapley}
    \phi _i (v) = \sum _{ S \subset N \setminus \{ i \} } \frac{ \lvert S \rvert ! \bigl( n - \lvert S \rvert -1 \bigr) ! }{ n ! } \Bigl( v \bigl(  S \cup \{ i \} \bigr) - v (S) \Bigr) .
  \end{equation}
\end{theorem}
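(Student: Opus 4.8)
The plan is to establish the two assertions of the theorem separately: \emph{existence}, that the explicit formula \eqref{eqn:shapley} defines an allocation obeying all four axioms, and \emph{uniqueness}, that at most one allocation can satisfy the four axioms simultaneously. Combining the two forces the unique admissible allocation to be the one displayed, which is precisely the content of ``Moreover, this allocation is given by\dots''.

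For existence I would read the coefficients probabilistically. The weight $\f{|S|!\,(N-1-|S|)!}{N!}$ is exactly the probability that, in a uniformly random ordering of $[N]$, the set of players preceding $i$ equals $S$; hence $\phi_i(v) = \E\bigl[v(P_i \cup \{i\}) - v(P_i)\bigr]$, where $P_i$ denotes the (random) set of predecessors of $i$. From this representation three of the axioms are immediate: linearity holds because $\phi_i(v)$ is a fixed linear functional of $v$; the null-player axiom holds because its hypothesis makes the integrand identically zero; and symmetry holds by pairing each ordering with the one obtained by transposing $i$ and $j$, a measure-preserving involution under which the hypothesis $v(S\cup\{i\}) = v(S\cup\{j\})$ matches the two expected marginals term by term. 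Efficiency is the only genuinely combinatorial point: for each fixed ordering the marginal contributions telescope to $v([N]) - v(\emptyset) = v([N])$, so taking expectations gives $\sum_{i\in[N]}\phi_i(v) = v([N])$, i.e.
\be
\sum_{i \in [N]} \sum_{S \subseteq [N]\setminus\{i\}} \f{|S|!\,(N-1-|S|)!}{N!}\bigl(v(S \cup \{i\}) - v(S)\bigr) = v([N]).
\ee

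For uniqueness I would pass to the \emph{unanimity games} $u_T$, indexed by nonempty $T \subseteq [N]$ and defined by $u_T(S) = 1$ if $T \subseteq S$ and $u_T(S) = 0$ otherwise. By Möbius inversion every $v \in \cG_N$ has a unique expansion $v = \sum_{T \neq \emptyset} c_T\, u_T$, so $\{u_T\}$ is a basis of $\cG_N$ and, by linearity, an admissible $\phi$ is pinned down once its values on each $u_T$ are known. Fixing $T$: every $i \notin T$ is a null player for $u_T$ (adjoining such an $i$ never changes whether $T$ is contained), so $\phi_i(u_T)=0$; any two $i,j \in T$ are symmetric for $u_T$, so they must receive equal shares; and efficiency forces the total to equal $u_T([N]) = 1$. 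Hence $\phi_i(u_T) = 1/|T|$ for $i \in T$ and $\phi_i(u_T)=0$ otherwise, with no freedom left. Since the $u_T$ span $\cG_N$, this determines $\phi$ on all of $\cG_N$, establishing uniqueness.

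The main obstacle is the efficiency verification in the existence step: identifying the weights with the random-order probabilities (equivalently, proving the telescoping identity above directly) is the one computation that is not purely formal, whereas the remaining axioms reduce to inspection once the probabilistic representation is in place. The uniqueness half is conceptually the more delicate but becomes routine as soon as the unanimity games are recognized as a basis; the slight care needed there is only in justifying the Möbius expansion, which is standard inclusion--exclusion.
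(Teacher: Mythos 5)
Your proof is correct, but note that the paper does not actually prove Theorem \ref{thm:shapley}: it is stated as a cited classical result of Shapley, so there is no in-paper argument to compare against line by line. Your existence half coincides exactly with the probabilistic reading the paper itself records immediately after the theorem in \eqref{eqn:shapleyPermutation} --- the weight $\lvert S\rvert!\,(N-1-\lvert S\rvert)!/N!$ as the probability that $S$ is the predecessor set of $i$ in a uniform ordering, with efficiency coming from telescoping along each ordering and symmetry from the transposition involution (the one case worth spelling out is $j\in P_i$, where the hypothesis applied to $S=P_i\setminus\{j\}$ gives $v(S\cup\{i\})=v(S\cup\{j\})$ and hence equal marginals). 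Your uniqueness half via the unanimity games $u_T$ is the standard and correct route: the $u_T$ for nonempty $T$ form a basis of the $(2^N-1)$-dimensional space $\cG_N$, and null-player, symmetry, and efficiency pin down $\phi_i(u_T)=\mathds{1}_{i\in T}/\lvert T\rvert$. It is worth observing that this differs from the strategy the paper uses later for its generalized Theorem \ref{main}, where uniqueness is obtained by induction on $N$ over the basis games $\delta_{S,N}$ and the auxiliary games $\Delta_{(S,S\cup\{i\})}$, with the reflection axiom A5 supplying the extra constraints needed to determine values at partial coalitions; the unanimity-game decomposition suffices for the grand-coalition value alone but would not by itself determine the component values $v_i(S)$ for $S\subsetneq[N]$, which is precisely why the paper's generalized argument takes a different shape.
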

The four terms denote different aspects of the allocation.  [efficiency] indicates that the value obtained by the grand coalition is fully distributed among the players; [symmetry] indicates that equivalent players receive equal amounts; [null-player] indicates that a player who contributes no marginal value to any coalition receives nothing; and [linearity] indicates that the allocation is linear in terms of game values. The four conditions outlined above are known as the Shapley axioms, the vector $\bigl( \phi _i (v) \bigr) _{ i \in N } $ is referred to as the Shapley value, and \eqref{eqn:shapley} is denoted as the Shapley formula.

The Shapley formula \eqref{eqn:shapley} can be rewritten as follows:  Suppose the players form the grand coalition by
joining, one-at-a-time, in the order defined by a permutation $\sigma$
of $N$. That is, player $i$ joins immediately after the coalition
$ S^{\sigma}_i= \bigl\{ j \in N : \sigma (j) < \sigma (i) \bigr\}
$ has formed, contributing marginal value
$ v \bigl( S^{\sigma}_i \cup \{ i \} \bigr) - v (S^{\sigma}_i
) $. Then $ \phi _i (v) $ is the average marginal value contributed by
player $i$ over all $ n ! $ permutations $\sigma$, that is,
\begin{align}
  \label{eqn:shapleyPermutation}
  \phi _i (v) = \frac{ 1 }{ n ! } \sum _\sigma \Bigl( v \bigl( S^{\sigma}_i \cup \{ i \} \bigr) - v (S^{\sigma}_i ) \Bigr).  
\end{align}
The well-known glove game below explains the formula \eqref{eqn:shapleyPermutation} in a simple context.
\begin{example}[Glove game]\label{ex:introGlove}
Let $n = 3 $. Suppose player
  $1$ has a left-hand glove, while players $2$ and $3$ each have a
  right-hand glove. A pair of gloves has value $1$, while unpaired gloves have no
  value, i.e., $ v (S) = 1 $ if $S \subset N $ contains player $1$ and at least one of players $2$
  or $3$, and $ v (S) = 0 $ otherwise. The Shapley values are:% given by:
  \begin{equation*}
    \phi _1 (v) = \tfrac{ 2 }{ 3 } , \qquad \phi _2 (v) = \phi _3 (v) = \tfrac{ 1 }{ 6 } .
  \end{equation*}
  This is easily seen from \eqref{eqn:shapleyPermutation}: player $1$ contributes
  marginal value $0$ when joining the coalition first (2 of 6 permutations) and marginal value $1$ otherwise (4 of 6 permutations), so $ \phi _1 (v) = \tfrac{ 2 }{ 3 } $. Efficiency and symmetry yield $\phi _2 (v) = \phi _3 (v) = \tfrac{ 1 }{ 6 }$.
 \end{example}
We note that the Shapley value can be readily applied to each coalition $T \subset N$ by employing the Shapley formula to the subgame $v \big|_T$\footnote{$v \big|_T : 2^T \to \R$ denotes the restriction of $v$ to the subsets of $T$, i.e., $v \big|_T(S) = v(S)$ for all $S \subset T$.}. This allocations scheme is termed the extended Shapley value. In the subsequent sections, we explore an alternative cooperative value allocation scheme that seamlessly extends to all coalitions $2^N$ and differs from the Shapley value for partial coalitions $T \subsetneq N$.

\section{Extension of the Shapley value via random coalition processes}\label{newformula}
Before delving into our generalization of the Shapley value to various setups, let us initially conceptualize the Shapley value as the expected outcome of a random coalition formation process. Consider the hypercube graph, or  coalition game graph $G=(\cV, \cE)$, where $\cV$ denotes the set of nodes and $\cE$ the set of edges. This graph is defined by
\begin{equation}\label{oldG}
  \cV := 2 ^{N}, \ \  \cE := \bigl\{ \bigl(  S, S \cup \{ i \} \bigr) \in \cV \times \cV \ | \  S \subset N \setminus \{ i \} ,\ i \in N \bigr\}.
  \end{equation}
Notice that each coalition $S \subset N$ can correspond to a vertex of the unit hypercube in $\R^{N}$. We assume that each edge is oriented in the direction of the inclusion $ S \hookrightarrow S \cup \{ i \}$. We also define the set of reverse (or negatively-oriented) edges 
\be
\cE_- := \bigl\{ \bigl(S \cup \{ i \}, S \bigr) \in \cV \times \cV \ | \  S \subset N \setminus \{ i \} ,\ i \in N \bigr\}.
\ee
The edges in $\cE$ are termed forward/positively-oriented edges. We set $\overline \cE = \cE \cup \cE_-$.
 \begin{figure}
\centering
\begin{subfigure}{.5\textwidth}
  \centering
  \includegraphics[width=.73\linewidth]{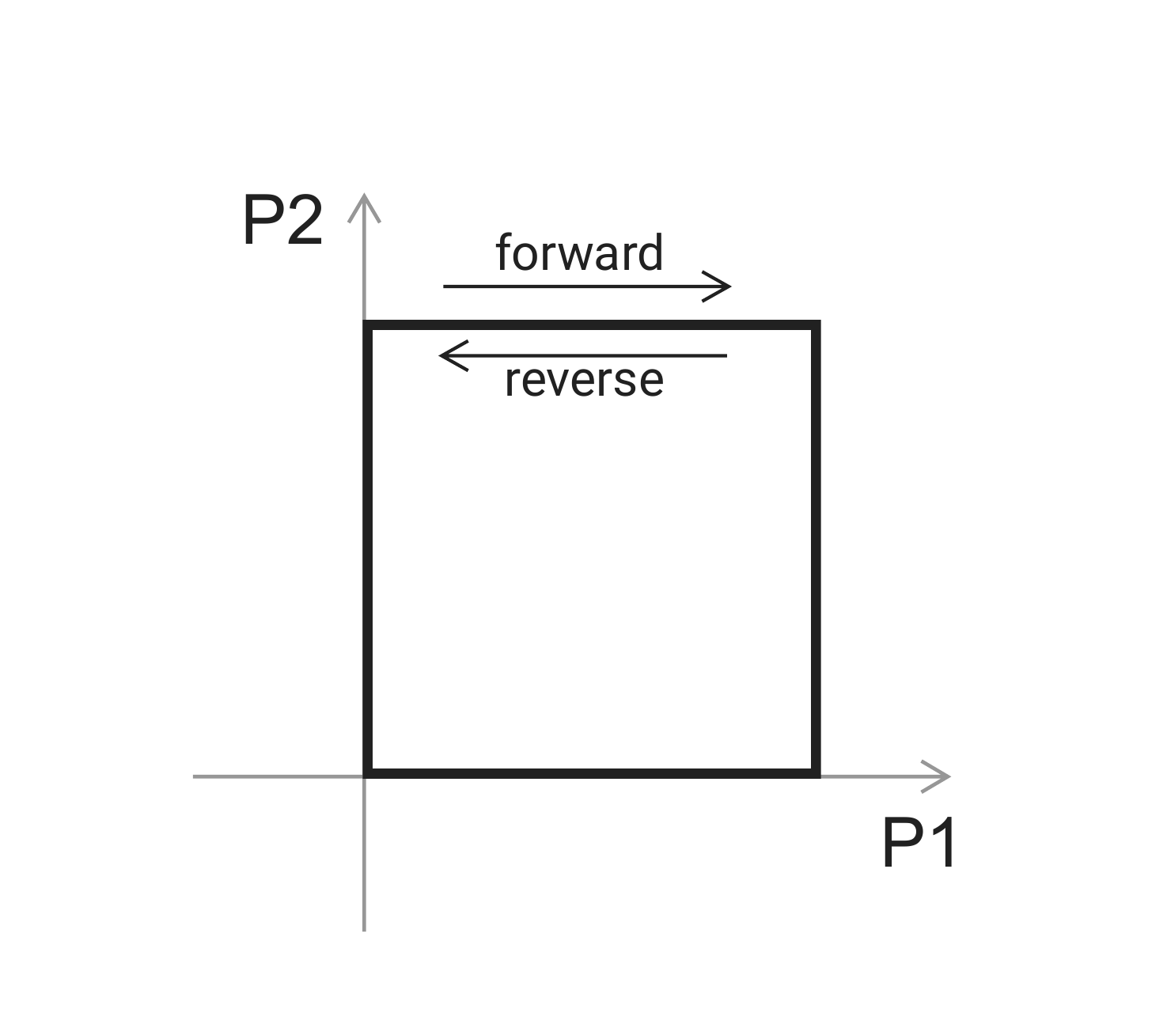}
  %\caption{A subfigure}
 % \label{fig:sub1}
\end{subfigure}%
\begin{subfigure}{.5\textwidth}
  \centering
  \includegraphics[width=.73\linewidth]{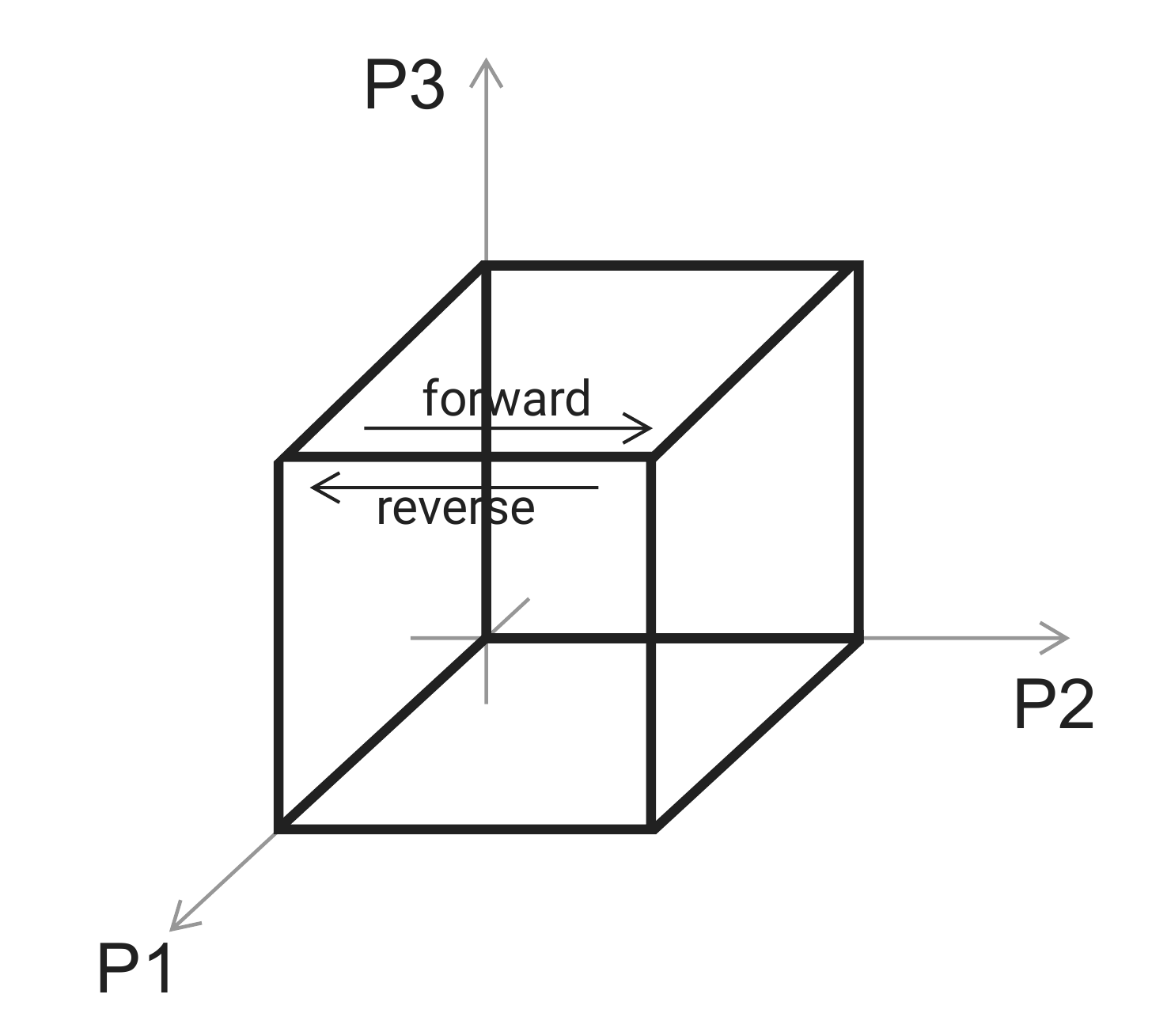}
 % \caption{A subfigure}
%  \label{fig:sub2}
\end{subfigure}
\caption{Coalition game graph for $n=2$ and $3$. Each vertex of the cube corresponds to a coalition. The vertex $(1,0,1)$, for example, corresponds to the coalition $\{1,3\}$, $(0,1,1)$ to $\{2,3\}$, and so on.}
\label{figure1}
\end{figure}

Next, recall that in the Shapley formula \eqref{eqn:shapleyPermutation}, the coalition formation is supposed to be only increasing, with each step resulting in a player joining a given coalition. In contrast, consider an example of a random coalition process, described by the canonical Markov chain $(X_t)_{t \in \N_0}$ ($\N_0 := \N \cup \{0\}$) on the coalition space $\cV$ in \eqref{oldG} with $X_0 = \emptyset$, equipped with the transition probability $p_{S,T}$ from a state $S$ to $T$ as follows:
 \begin{align}\label{MC}
p_{S,T} := 1/n \ \text{ if } \ T \sim S, \q  p_{S,T} := 0 \ \text{ if } \ T \not\sim S.
\end{align} 
\eqref{MC} may be interpreted as the value allocator's best estimate that each player has a similar likelihood of joining or leaving the given coalition at any time (before any actual coalition begins). We emphasize the coalition process now allows a player to not only join but also leave the current coalition, i.e., the transition from $S$ to $S \setminus \{i\}$ is possible.

Let $(\Omega, \cF, \cP)$ denote the underlying probability space for formality. For each $T \in \cV$ and a sample coalition path $\omega \in \Omega$, let $\tau_{T} = \tau_{T}(\omega) \in \N$ denote the first (random) time the coalition process $\big(X_t(\omega)\big)_t$ visits $T$, that is, $X_{\tau_{T}}(\omega) = T$ and $X_t(\omega) \ne T$ for all $t \le \tau_{T} -1$. 
\begin{figure}
\centering
\begin{subfigure}{.5\textwidth}
  \centering
  \includegraphics[width=.73\linewidth]{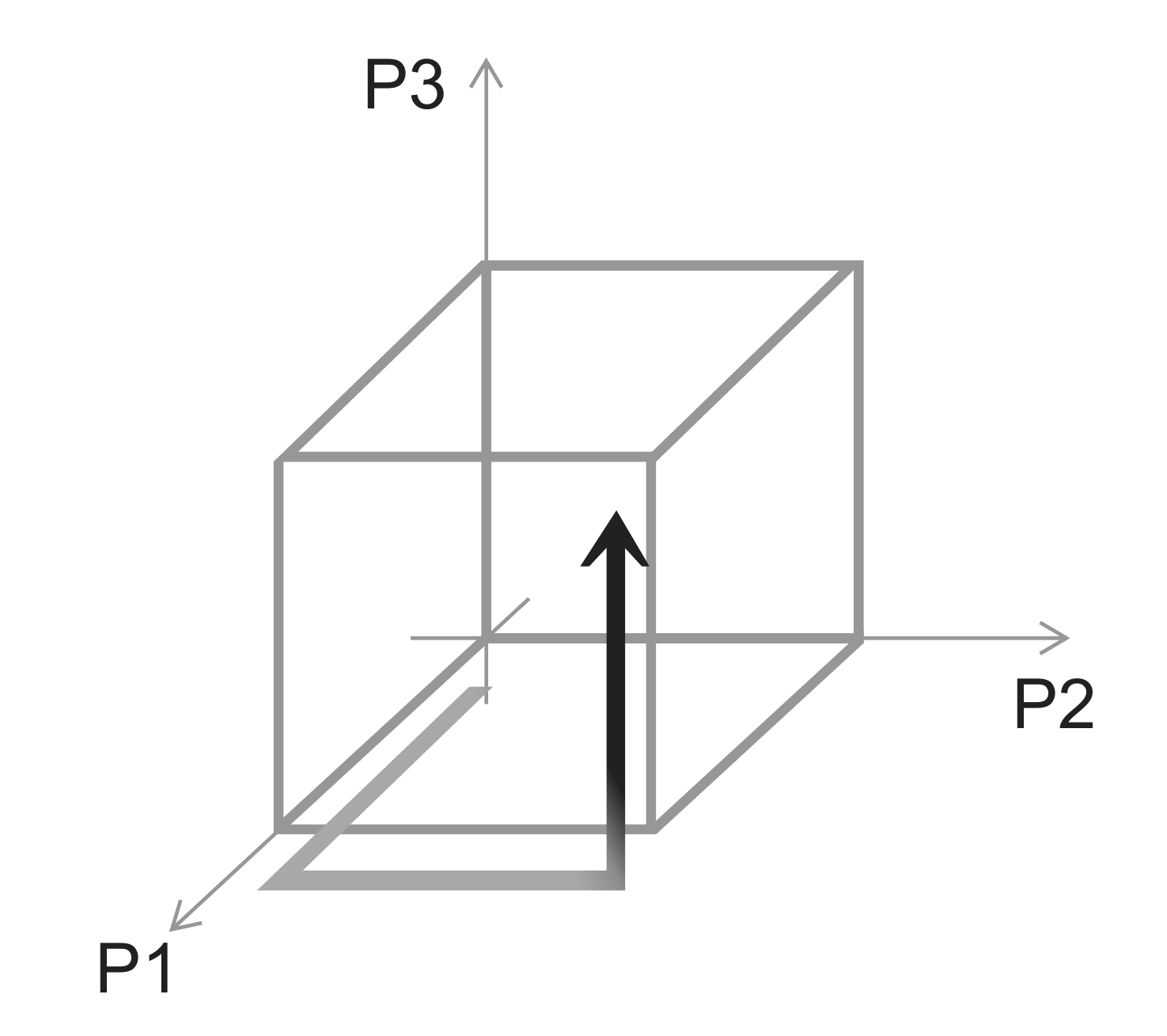}
  %\caption{A subfigure}
 % \label{fig:sub1}
\end{subfigure}%
\begin{subfigure}{.5\textwidth}
  \centering
  \includegraphics[width=.73\linewidth]{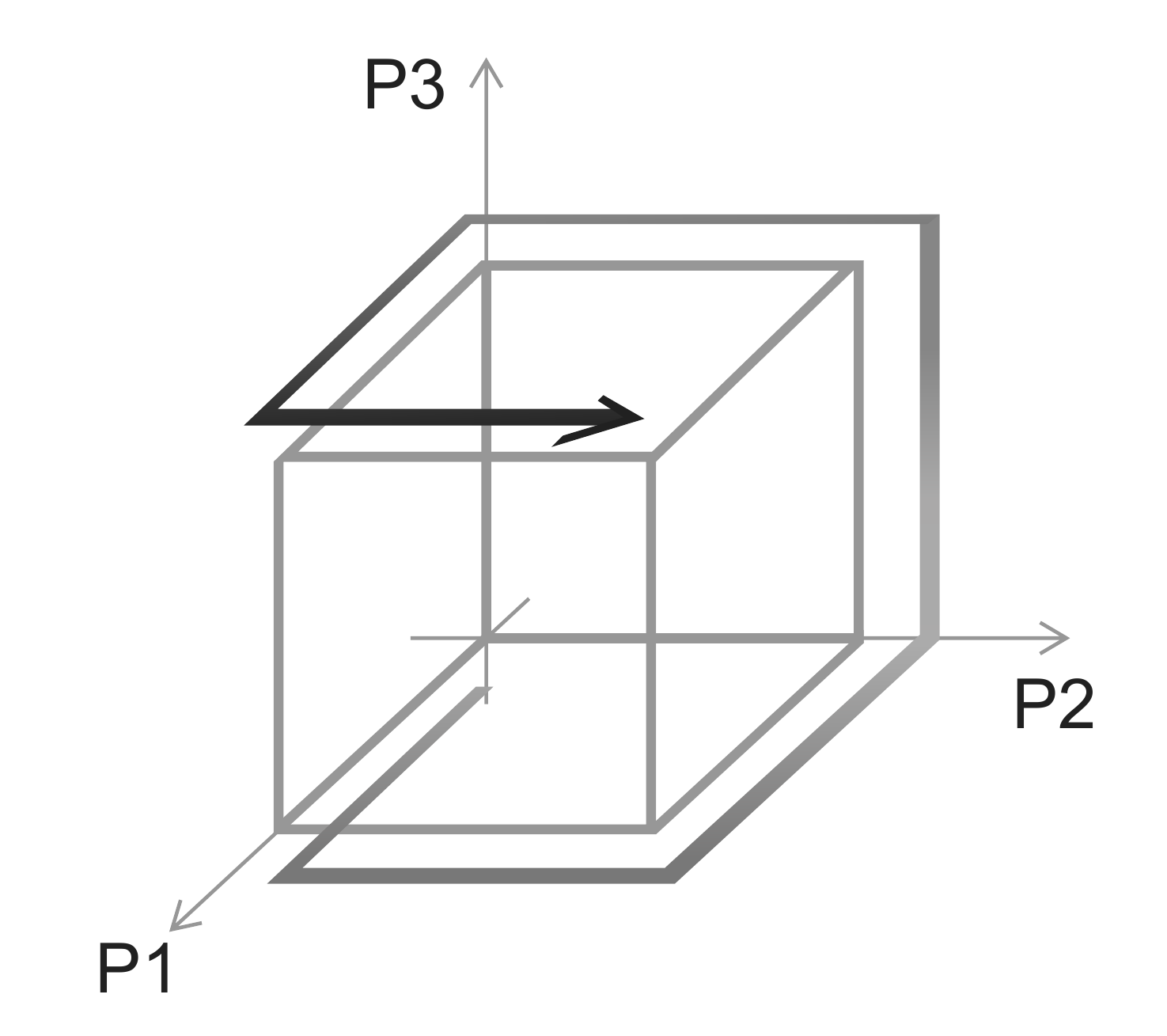}
 % \caption{A subfigure}
%  \label{fig:sub2}
\end{subfigure}
\caption{Examples of Shapley's coalition path and our general coalition path. The path in this example has no loops, but in general, our coalition path is allowed to have an arbitrary number of loops.}
\label{figure2}
\end{figure}
Now given a coalition game $v \in \cG_N$, the total contribution of player $i$ along the sample coalition path $\omega$ traveling from $\emptyset$ to $T$ can be calculated as
\be\label{pathintegral}
{\cal I}_i(v, T) = {\cal I}_i(v, T)(\omega) := \sum_{t=1}^{\tau_{T}(\omega)} \p_i  v \big(X_{t-1}(\omega), X_t(\omega) \big),
\ee
where for each $ i \in N $, the partial differential $\p_i v : \overline \cE \to \R$ of a game $v$ is defined as
  \begin{equation}\label{partialdifferential}
    \p_i v \bigl( S , S \cup \{ j \} \bigr) :=
    \begin{cases}
v (S \cup \{ i \}) - v(S) & \text{if } \ j=i, \\
      0 & \text{if } \  j \neq i,
    \end{cases}
  \end{equation} 
and $\p_i v \bigl(S \cup \{ j \}, S \bigr) := - \p_i v \bigl( S , S \cup \{ j \} \bigr)$. Notice that $ \p_i v$ represents the marginal contribution of player $i$ to the game $v$ in both directions of joining and leaving. Thus, given that the coalition has progressed to $T$ along the path $\omega$, \eqref{pathintegral} represents player $i$'s total marginal contribution throughout the progression. This in turn implies that the value function given by the following average path integral
\be\label{value}
\Phi_i (v, T) := \E[ {\cal I}_i(v, T)] = \int_\Omega  {\cal I}_i(v, T)(\omega)\, d \cP(\omega)
\ee
represents player $i$'s expected total contribution given the state advances from $\emptyset$ to $T$. 

Now our interpretation of the Shapley value as the expected outcome of the random coalition formation process is stated as follows.
\begin{theorem}\label{coincide1}
The Shapley value and the value function $(\Phi_i)_{i \in N}$ evaluated at the grand coalition coincide. That is, $\phi_i(v) = \Phi_i(v,N)$ for every $v \in \cG_N$ and $i \in N$.
\end{theorem}

Theorem \ref{coincide1} shows the Shapley value $\phi_i(v) $ can be interpreted as the player $i$'s expected total contribution, thus her fair share, if the coalition process ends at the grand coalition and the player $i$'s marginal contribution for each transition is given by $ \p_i v$. 

The summation formulas in \eqref{eqn:shapleyPermutation} and \eqref{value}, on the other hand, appear quite different. While \eqref{eqn:shapleyPermutation} consists of a finite sum along $n!$ paths in increasing order driven by permutations $\sigma$, the sum in \eqref{value} is infinite and takes into account all possible paths $\omega$.% that describe the arbitrary coalition progression. 
\begin{example}\label{twoperson}
 We demonstrate how to directly calculate the value \eqref{value} for a general two-person game. We denote $v_1 = v(\{1\})$, $v_2 = v(\{2\})$, and $v_{12} = v(\{1,2\})$ for convenience. First, the Shapley formula \eqref{eqn:shapleyPermutation} clearly yields
\be
\phi_1(v) = \tfrac{1}{2}(v_1 + v_{12} - v_2), \q \phi_2(v) = \tfrac{1}{2}(v_2 + v_{12} - v_1).
\nn
\ee
To calculate $\Phi_1(N)$ and $\Phi_2(N)$, note that sample paths from $\emptyset$ to $N=\{1,2\}$ can have lengths (i.e. number of transitions) $2l$, $l \in \N$.  We may enumerate sample paths by their lengths, and calculate the contribution function $\cI_1, \cI_2$ as follows:
\be
%\begin{table}[htbp]
    \centering
    %\begin{tabularx}{\textwidth}{| X | X | X |}
    \begin{tabularx}{0.9\textwidth}{bss}
        \hline
        Path $\omega$     & $\cI_1(\omega)$     & $\cI_2(\omega)$ \\ \hline
        $(\emptyset, \{1\}, \{1,2\})$        & $v_1$        & $v_{12} - v_1$         \\ \hline
        $(\emptyset, \{2\}, \{1,2\})$         & $v_{12} - v_2$        & $v_2$        \\ \hline
        $(\emptyset, \{1\}, \emptyset, \{1\}, \{1,2\})$     & $v_1$        & $v_{12} - v_1$         \\ \hline
        $(\emptyset, \{1\}, \emptyset, \{2\}, \{1,2\})$     & $v_{12} - v_2$        & $v_2$        \\ \hline
       $(\emptyset, \{2\}, \emptyset, \{1\}, \{1,2\})$     & $v_1$        & $v_{12} - v_1$         \\ \hline
        $(\emptyset, \{2\}, \emptyset, \{2\}, \{1,2\})$     & $v_{12} - v_2$        & $v_2$        \\ \hline
      $(\emptyset, \{1\}, \emptyset, \{1\},\emptyset, \{1\}, \{1,2\})$     & $v_1$        & $v_{12} - v_1$         \\ \hline
    $(\emptyset, \{1\}, \emptyset, \{1\},\emptyset, \{2\}, \{1,2\})$     & $ v_{12}-v_2$        & $v_{2}$         \\ \hline
        $\dots$     & $\dots$   & $\dots$    
    \end{tabularx}
    \nn
%\end{table}
\ee
Calculation of $\cI_i$ appears fairly simple due to the sign-changing property of $\p_i$, i.e., $\p_i v \bigl(S \cup \{ j \}, S \bigr) = - \p_i v \bigl( S , S \cup \{ j \} \bigr)$, which yields many cancellations in the path integral $\cI_i(\omega)$. Under the transition law \eqref{MC}, the probability of a sample path $\omega$ of length $L$ being realized is simply $(1/2)^{L}$. This yields
\begin{align*}
\Phi_1(N) &= (\tfrac{1}{2})^2(v_1 + v_{12} - v_2) + (\tfrac{1}{2})^4\cdot 2 (v_1 + v_{12} - v_2) + (\tfrac{1}{2})^6\cdot 2^{2} (v_1 + v_{12} - v_2) + \dots\\
&=\big((\tfrac{1}{2})^2 + (\tfrac{1}{2})^3 + (\tfrac{1}{2})^4 + \dots\big)  (v_1 + v_{12} - v_2) = \tfrac{1}{2}(v_1 + v_{12} - v_2) = \phi_1(v),
\end{align*}
and similarly $\Phi_2(N) = \tfrac{1}{2}(v_2 + v_{12} - v_1) = \phi_2(v)$, as asserted in Theorem \ref{coincide1}.

If the terminal coalition is $\{1\}$, the calculation of  $\Phi_1(\{1\})$ and $\Phi_2(\{1\})$ proceeds as
\be
%\begin{table}[htbp]
    \centering
    %\begin{tabularx}{\textwidth}{| X | X | X |}
    \begin{tabularx}{0.9\textwidth}{bss}
        \hline
        Path $\omega$     & $\cI_1(\omega)$     & $\cI_2(\omega)$ \\ \hline
        $(\emptyset, \{1\})$        & $v_1$        & $0$         \\ \hline
        $(\emptyset, \{2\}, \es, \{1\})$         & $v_1$        & $0$  \\  \hline
        $(\emptyset, \{2\}, \{1,2\}, \{1\})$     & $v_{12}-v_2$        & $ v_2 - (v_{12} - v_1)$         \\ \hline
        $(\emptyset, \{2\}, \es, \{2\}, \es,  \{1\})$  & $v_1$        & $0$         \\ \hline
        $(\emptyset, \{2\}, \es, \{2\}, \{1,2\},  \{1\})$ & $v_{12}-v_2$ & $v_2 - (v_{12} - v_1)$ \\ \hline
        $(\emptyset, \{2\}, \{1,2\}, \{2\}, \emptyset, \{1\})$ & $v_1$        & $0$         \\ \hline
        $(\emptyset, \{2\}, \{1,2\}, \{2\}, \{1,2\}, \{1\})$ & $v_{12}-v_2$ & $v_2 - (v_{12} - v_1)$ \\ \hline
        $(\emptyset, \{2\}, \es, \{2\}, \es,  \{2\}, \es, \{1\})$ & $v_1$        & $0$  \\  \hline
        $\dots$     & $\dots$   & $\dots$  
    \end{tabularx}
    \nn
%\end{table}
\ee
Adding up, we get, for example,
\begin{align*}
\Phi_1(\{1\}) &= \tfrac{1}{2} v_1 + (\tfrac{1}{2})^3 (v_1 + v_{12} - v_2) + (\tfrac{1}{2})^5 \cdot 2 (v_1 + v_{12} - v_2) + \dots\\
&= \tfrac{1}{2} v_1 + \big((\tfrac{1}{2})^3 + (\tfrac{1}{2})^4 + \dots\big)  (v_1 + v_{12} - v_2) = \tfrac{1}{4}(3v_1 + v_{12} - v_2).
\end{align*}
The complete value allocation table is the following.
\be\label{twopersontable}
\centering
\begin{tabularx}{0.9\textwidth}
{ 
  | >{\centering\arraybackslash}X
  | >{\centering\arraybackslash}X 
  | >{\centering\arraybackslash}X 
  | >{\centering\arraybackslash}X 
  | }
\hline 
 & $\{1\}$ & $\{2\}$ & $\{1,2\}$ \\
\hline 
$\Phi_1$ & $\frac14(3v_1 -v_2 + v_{12})$  & $\frac14(v_1+v_2-v_{12})$ & $\frac12(v_1-v_2+v_{12})$  \\
\hline 
$\Phi_2$ & $\frac14(v_1+v_2-v_{12})$  & $\frac14(3v_2 -v_1 + v_{12})$ & $\frac12(v_2-v_1+v_{12})$  \\
\hline
\end{tabularx}
\ee
We observe that efficiency holds, i.e., $v(S) = \Phi_1(S) + \Phi_2(S)$ for all $S \subset N$.
\end{example}

While calculating the value allocation operator $\Phi = (\Phi_i)_{i \in N}$ is feasible for two-person games due to their simple game graph structure, it becomes considerably more challenging as the number of players $n$ increases. For instance, computing $\Phi$ for the glove game quickly becomes nontrivial. In light of this complexity, one might question the rationale for using \eqref{value} when the much simpler Shapley formula is readily available.

The first reason is that $\Phi$ extends value allocation for all partial coalition $T \subset N$. Though the extended Shapley value could allocate the value $v(T)$ by applying the Shapley formula to the subgame $v \big|_T$, the formula implicitly assumes that the coalition only increases toward the target $T$, thus failing to comprehensively reflect the whole game structure. For instance, the Shapley value at $T$ completely disregards marginal value information in the form of $v(S \cup \{i\}) - v(S)$ for any $S \subset T$ and $i \notin T$. In contrast, the value $\Phi$ thoroughly explores the game structure by integrating players' marginal value along each general coalition path, resulting in a fair allocation of the collaborative value $v(T)$ yet distinct from the Shapley value. Nevertheless, Theorem \ref{coincide1} shows that the value \eqref{value} can be interpreted as an extension of the Shapley value.

Secondly, the significance of the Shapley value in cooperative game theory partly lies in the intuitive and compelling nature of its four defining axioms. These axioms have spurred numerous subsequent studies and explorations of variant axioms. Inspired by the literature, we demonstrate that the value $\Phi(v,T)$ can be simultaneously  determined for all $v \in \cG_N$ and $T \subset N$ based on five properties that extend the Shapley axioms.

Thirdly, the average path integral formula \eqref{value} facilitates a significant expansion of cooperative games and their value allocation schemes. This expansion encompasses broader domains such as general cooperative game networks beyond \eqref{oldG}, more comprehensive marginal values of players beyond \eqref{partialdifferential}, and diverse cooperative processes beyond \eqref{MC}. Once we establish a game graph, players' marginal value, and cooperative process on the graph, the formula \eqref{value} becomes meaningful. This generalization is particularly advantageous for investigating allocations of cooperative games that may not inherently satisfy efficiency. The efficiency condition is now interpreted as a straightforward equality constraint on the marginal values of the players; refer to Remark \ref{marginalefficiency}.

Finally, if the formula \eqref{value} proves challenging to compute, it becomes impractical. However, we shall show that \eqref{value} can be easily calculated through a system of linear equations for a broad range of cooperative processes. For instance, the solution to the two-person game in Example \ref{twoperson} can be obtained by solving the system
\begin{align*}
\begin{bmatrix}
-1 & -1 & 0 \\
2 & 0 & -1 \\
0 & 2 & -1 \\
-1 & -1 & 2
\end{bmatrix} 
\begin{bmatrix}
\Phi_1(\{1\}) & \Phi_2(\{1\})\\
\Phi_1(\{2\}) & \Phi_2(\{2\}) \\
\Phi_1(\{1,2\}) & \Phi_2(\{1,2\})
\end{bmatrix} 
&=
\begin{bmatrix}
-v_1 & -v_2\\
v_1 & v_1 - v_{12} \\
 v_2 - v_{12} & v_2 \\
v_{12} - v_2 & v_{12} - v_1
\end{bmatrix}.
\end{align*}
Calculation of \eqref{value} in this manner for the glove game yields the following table.
\be\label{glovegameextended}
\noin
\begin{tabularx}{0.913\textwidth}
{ 
  | >{\centering\arraybackslash}X 
    | >{\centering\arraybackslash}X 
  | >{\centering\arraybackslash}X 
  | >{\centering\arraybackslash}X 
    | >{\centering\arraybackslash}X 
    | >{\centering\arraybackslash}X 
  | >{\centering\arraybackslash}X 
  | >{\centering\arraybackslash}X 
  | }

\hline 
 & $\{1\}$ & $\{2\}$ & $\{3\}$ & $\{1,2\}$ & $\{1,3\}$ & $\{2,3\}$ & $\{1,2,3\}$ \\
\hline 
$\Phi_1$ & $\frac{5 }{ 12}$ & $ -\frac{ 5}{ 24}$ & $ -\frac{ 5}{ 24}$ & $\frac{5 }{ 8} $ &$ \frac{5 }{ 8}$ &$ -\frac{ 1}{ 4} $ & $\frac{2 }{ 3} $ \\
\hline 
$\Phi_2$ &$-\frac{ 5}{ 24} $ & $\frac{1 }{6 } $ & $  \frac{1 }{ 24} $ & $  \frac{ 3}{ 8}  $ & $0 $ & $\frac{1 }{8} $ & $\frac{1 }{ 6} $ \\
\hline
$\Phi_3$ & $-\frac{ 5}{ 24} $ & $ \frac{1 }{ 24}  $ & $\frac{1 }{6 }$ & $ 0$ & $ \frac{ 3}{ 8}$ & $\frac{1 }{8}$ & $\frac{1 }{ 6} $ \\
\hline
\end{tabularx}
\ee
The final column indeed corresponds to the Shapley value. Later, we will elaborate on and expand this table for the $\alpha$-Shapley value. Finally, we shall elucidate the connection between the average path integral \eqref{value} and the linear system, referred to as Poisson's equation, by illuminating it through the perspective of combinatorial Hodge theory.

\section{A characterization of the value $\Phi$}\label{newaxiom}

In addition to Shapley's original axiomatic framework, several alternative foundations have been proposed since. \citet{young1985monotonic} showed that the Shapley value is the unique solution that adheres to efficiency, symmetry, and marginality.\footnote{Marginality states that for all $i \in N$, if $\Delta_i v = \Delta_i w$, then $\phi_i(v) = \phi_i(w)$, where $\Delta_i v (S) := v(S) - v(S \setminus \{i\})$ if $i \in S$, $v(S \cup \{i\}) - v(S)$ if $i \notin S$.} \citet{chun1989new} showed that the Shapley value is the only value that satisfies efficiency, triviality,\footnote{Triviality states that if $v \equiv 0$ ($v(S) = 0$ for all $S \subset N$), then the value $\phi_i(v)=0$ for all $i \in N$.} coalitional strategic equivalence,\footnote{Coalitional strategic equivalence states that for all $\es \neq S \subset N$ and $\al \in \R$, if $v=w + w_\al^S$, then $\phi_i(v) = \phi_i(w)$ for all $i \in N \setminus S$, where $w_\al^S(T) := \al$ if $S \subset T$, and $0$ otherwise.} and fair ranking\footnote{Fair ranking states that for any given $T \subset N$, if $v(S)=w(S)$ for all $S \neq T$, then $\phi_i(v) > \phi_j(v)$ implies $\phi_i(w) > \phi_j(w)$ for all $i, j \in T$.} criteria. \citet{casajus2018decomposition} characterizes the Shapley value as the unique decomposable decomposer of the naïve value $\big(v(N) - v(N \setminus \{i\}\big)_{i \in N}$ into a direct part and an indirect part, where the latter indicates how much each player contributes to the other players' direct parts. \citet{hart1989potential}'s characterization of the Shapley value via the potential function approach is discussed in Remark \ref{moreaxioms}. These results motivate us to identify properties that can characterize the value \eqref{value} for all games $v$ and coalitions $S \subset N$ at the same time. Following \cite{casajus2018decomposition}, we assume that the player sets are subsets of a countably infinite set $\cal U$, the universe of players; $\cN$ denotes the set of all finite subsets of $\cal U$. Let $ \cG = \bigcup_{N \in \cN} \cG_N$ denote the set of all coalition games. 

For $i,j \in N$ and $S \subset N$, we define $S^{ij} \subset N$ by switching $i$ and $j$ in $S$, that is,
  \begin{equation*}
S^{ij}=
    \begin{cases}
      S & \text{if } \ S \subset N \setminus \{i,j\}  \, \text{ or } \,  \{i,j\} \subset S, \\
      S \cup \{i\} \setminus \{j\} & \text{if } \ i \notin S \,\text{ and } \, j \in S, \\
S \cup \{j\} \setminus \{i\} & \text{if } \ j \notin S \, \text{ and } \, i \in S.
    \end{cases}
  \end{equation*} 
For $v \in \cG_N$ and $i,j \in N$, we define $v^{ij} \in \cG_N$ by $v^{ij}(S) = v(S^{ij})$. Intuitively, the contributions of the players $i,j$ in the game $v$ are interchanged in the game $v^{ij}$. Let $v_{-i} : 2^{N \setminus \{i\}} \to \R$ denote the restricted game of $v$ on the subsets of players $N \setminus \{i\}$, i.e., $v_{-i}(S) = v(S)$ for all $S \subset N \setminus\{i\}$. We shall now describe our set of five properties.
 \vspace{1mm}

{\rm {\bf A1}(efficiency):} $v (S) = \sum_{i \in N} \Phi _i (v,S)$ for any $v \in \cG_N$ and $S \subset N$.
\vspace{1mm}

{\rm {\bf A2}(linearity):} For any $v, v' \in\cG_{N}$, $\al, \al' \in \R$ and $S \subset N$, it holds
\[
\Phi _i (\al v + \al' v', S) = \al \Phi _i (v,S) + \al' \Phi _i (v',S).
\]
We observe that A1, A2 are natural extensions of Shapley's efficiency and linearity axioms now holding for all coalitions $S \subset N$.
 \vspace{1mm}
 
{\rm {\bf A3}(symmetry):} $ \Phi _i (v^{ij}, S^{ij}) =\Phi _j (v, S) $ for all $v \in \cG_N$, $i,j \in N$ and $S \subset N$.
\vspace{1mm}
 
We may interpret A3 as follows: if the contributions of players $i$ and $j$ are interchanged in the game, their payoffs also switch accordingly.
 \vspace{1mm}

{\rm {\bf A4}(null-player):} For any $v \in \cG_N$ and $i \in N$, if $\p_i v \equiv 0$, then  
\be
\Phi_j(v, S \cup \{i\}) = \Phi_j(v,S) =\Phi_j(v_{-i},S) \ \text{ for all }  j \in N \setminus \{i\} \text{ and }  S \subset N \setminus \{i\}. \nn
\ee
 A4 says if player $i$ provides no marginal value, the reward of the rest is independent of the player $i$'s participation. A4 and A1 implies that in this case, $i$ receives nothing, i.e., $\Phi_i (v, S)= 0$ for all $S \subset N$. Notice A4 gives a relation between the payoffs of $v$ and $v_{-i}$. 
 
So far, A1--A4 can be seen as a natural extension of the Shapley axioms to deal with different groups of players $N$ and coalitions $S$, as well as their symmetric counterpart $S^{ij}$. In particular, A1--A4 will be able to  determine the Shapley value $\Phi_i (v, N)$. However, A1--A4 appears insufficient to fully determine $\Phi$ for all coalitions $S$. Our observation is that the following condition appears to be the key to complement A1--A4.  
\vspace{1mm}

%{\rm {\bf A5}(reflection):} For any $v \in \cG_N$ and $S \subset [N] \setminus \{i,j\}$ with $i \neq j$, it holds
%\be 
% \Phi _i [v]( S \cup \{ i,j \}) - \Phi _i [v]( S \cup \{ i \}) = -\big( \Phi _i [v](S \cup \{j\}) - \Phi _i [v](S) \big).
% \ee
%A5 may be interpreted as $j$'s marginal increment to $i$'s value at $S \cup \{i\}$ is equal to $j$'s marginal decrement to $i$'s value at $S$, i.e., $j$ joining the coalition when $i$ is not a member makes $i$ worse off, by the exact amount that $j$ joining the coalition when $i$ is a member makes $i$ better off. 
%Clearly, repeated application of A5 implies
{\rm {\bf A5}(reflection):}
 For any $v \in \cG_N$, $i \in N$ and $S,T \subset N \setminus \{i\}$, it holds  
\be\label{reflect}
\Phi _i (v,  T \cup \{ i \}) - \Phi _i (v, S \cup \{ i \})= -\bigl( \Phi _i (v, T) - \Phi _i (v, S) \bigr). 
\ee
A5 is indeed inspired by the stochastic path integral representation of the value function \eqref{value}. Let $S,T \subset N \setminus \{i\}$, and consider an arbitrary coalition path (see Figure \ref{figure3})
\be
\omega: X_0 \to X_1 \to \dots \to X_\tau \nn
\ee
where $X_0 = S$, $X_\tau = T$, and each $(X_t, X_{t+1})$ is either a forward- or reverse-oriented edge of the hypercube graph. Then the reflection of $\omega$ with respect to $i$ is given by
\begin{figure}
\centering
\begin{subfigure}{1.0\textwidth}
  \centering
  \includegraphics[width=0.8\linewidth]{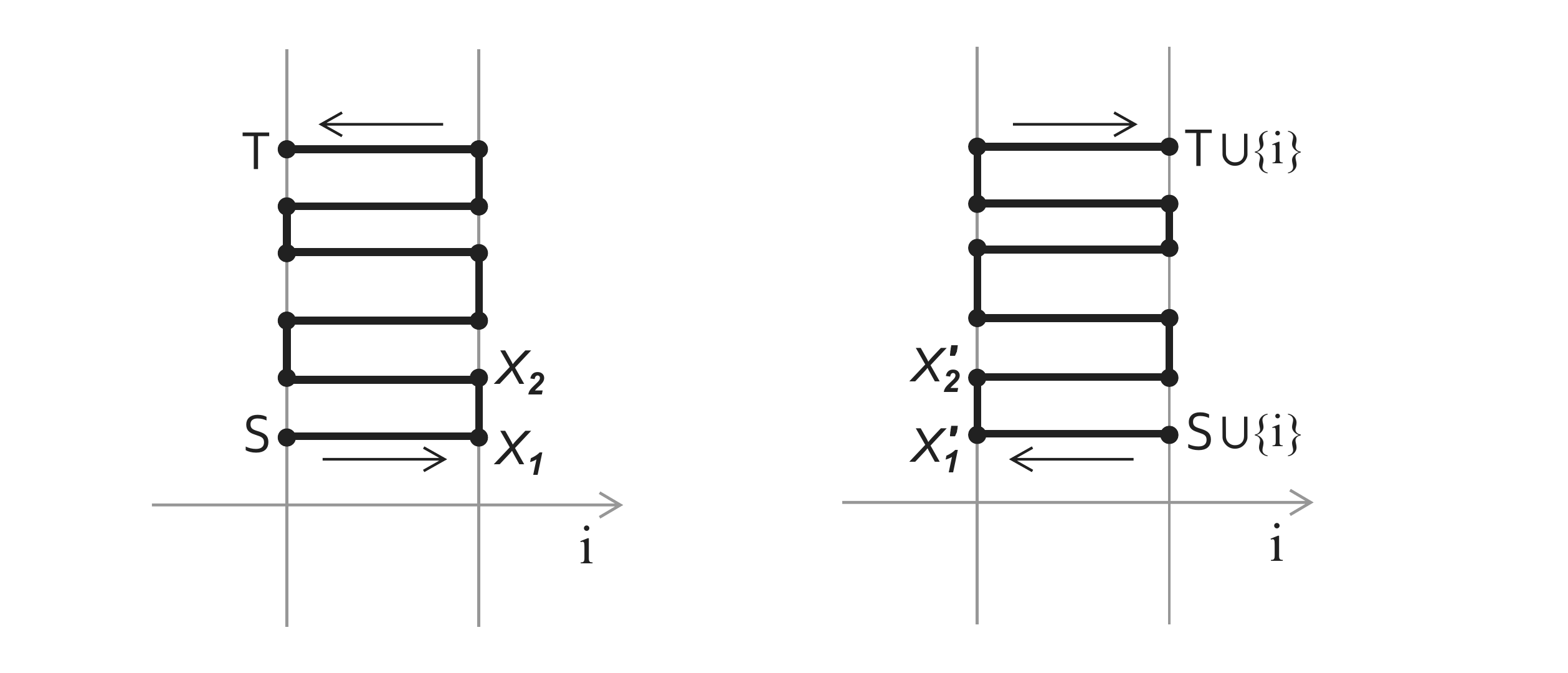}
  %\caption{A subfigure}
 % \label{fig:sub1}
\end{subfigure}%

\caption{A coalition path from $S$ to $T$ and its reflection w.r.t. $i$.}
\label{figure3}
\end{figure}
\be
\omega': X'_0 \to X'_1 \to \dots \to X'_\tau \nn
\ee
where $X'_t := X_t \cup \{i\}$ if $i \notin X_t$, and $X'_t := X_t \setminus \{i\}$ if $i \in X_t$. We observe that the total contribution of the player $i$ (that is, the sum of $\p_i v$'s) along the paths $\omega$ and $\omega'$ has the opposite sign, because whenever the player $i$ joins or leaves coalition along $\omega$, $i$ leaves or joins coalition along $\omega'$. On the other hand, we will show later that integrating $\p_i v$ over all paths traveling from $S$ to $T$ yields $\Phi_i(v, T) - \Phi_i(v, S)$ (see \ref{transition}), while integrating over all reflected paths yields $\Phi_i(v, T\cup \{i\}) - \Phi_i(v, S \cup \{i\})$. With this, the opposite sign of path integrals now yields  the reflection property, %\footnote{The following condition appears to be weaker than \eqref{reflect}: For any $v \in \cG_N$ and $S \subset [N] \setminus \{i,j\}$ with $i \neq j$, it holds that $(*): 
% \Phi _i [v]( S \cup \{ i,j \}) - \Phi _i [v]( S \cup \{ i \}) = -\big( \Phi _i [v](S \cup \{j\}) - \Phi _i [v](S) \big)$. However, It is obvious that repeated application of $(*)$ implies \eqref{reflect}, hence they are equivalent.}
 which provides information about the values at two different states $S,T$ in terms of their relationship with $S \cup \{i\}, T \cup \{i\}$. This allows us to determine the allocation $\Phi$ for all games $v$ and coalitions $S \subset N$. 
 
\begin{theorem}\label{main}
There exists a unique allocation map $\Phi = (\Phi_i)_{i \in N} :  \cG_N \times 2^N \to \R^{n}$ that satisfies {\bf A1--A5} for all $N$ and $\Phi(v, \emptyset) = {\bf 0}$. The map $\Phi$ is represented by \eqref{value}.
\end{theorem}

In summary, while the Shapley formula \eqref{eqn:shapleyPermutation} considers coalition processes solely in the joining direction, the path integral formula \eqref{value} enables coalitions to proceed in either direction, thereby determining the value $\Phi(S)$ for all $S \subset N$ and $i \in N$. To the best of my knowledge, the theorem constitutes the first such simultaneous characterization.

\begin{remark} The following condition appears weaker than A5.

{\rm {\bf A5'}(reflection):} For any $v \in \cG_N$ and $S \subset [N] \setminus \{i,j\}$ with $i \neq j$, it holds
\be 
 \Phi _i [v]( S \cup \{ i,j \}) - \Phi _i [v]( S \cup \{ i \}) = -\big( \Phi _i [v](S \cup \{j\}) - \Phi _i [v](S) \big).
 \ee
However, A5 and A5' are equivalent since repeated application of A5' clearly yields A5. One may be tempted to interpret A5' as ``$j$ joining the coalition when $i$ is not a member makes $i$ worse off by the exact amount that $j$ joining the coalition when $i$ is a member makes $i$ better off." While this interpretation appears plausible, it appears to attribute the change of value $\Phi _i [v](S \cup \{j\}) - \Phi _i [v](S)$ solely to $j$. Even if there is only one step for the transition from $S$ to $S \cup \{j\}$, there still exists infinitely many paths between them, making it difficult to attribute the value change solely to $j$. In addition, we recall that the value $\Phi _i [v](S)$ makes a premise that $S$ is the terminal coalition. If the value $-\big(\Phi _i [v](S \cup \{j\}) - \Phi _i [v](S) \big)$ is interpreted as ``$j$ joining the coalition when $i$ is not a member makes $i$ worse off...", it appears that the precondition is forgotten. For these reasons, we refrain from providing normative justification for A1--A5, nor do we refer to them as ``axioms". These properties are the characteristics that the value $\Phi$ exhibits, and Theorem \ref{main} shows that they determine the value for all coalitions simultaneously.\footnote{This theorem could be better understood as a contribution to computer science, as it provides a finite set of properties that collectively determine the solution to the combinatorial Poisson's equation on graph (see Section \ref{AvePoisson}), which appears to be the first of its kind.}
\end{remark}

\begin{remark}\label{bettersetup}
Theorem \ref{main} appears to heavily depend on the symmetry of the graph \eqref{oldG} and the transition probability \eqref{MC}, particularly through A5. \eqref{MC} may be viewed as inappropriate for some situations. For example, in glove game, \eqref{MC} says with probability 1/3 the state $\{1,2\}$ may transition to $\{1\}$, for which the marginal value of player 2 is $-1$. A criticism could be that player 2 appears to pay a dissolution fee to player 1 for dissolving the $\{1,2\}$ coalition, but then player 2 would not willingly leave the $\{1,2\}$ coalition. To respond, we recall that \eqref{MC}, if adopted by the value allocator (VA), is VA's best likelihood estimate for coalition progression given information to VA. For example, VA may be aware of an outside incentive for player 2 to dissolve $\{1,2\}$. If VA realizes that the transition from $\{1,2\}$ to $\{1\}$ is truly impossible, it will assign the corresponding transition probability as 0. Alternatively, VA may perform a new marginal value assessment for each transition other than $\p_i v$ in \eqref{partialdifferential}. We recognize that \eqref{MC} for transition estimation and \eqref{partialdifferential} for marginal value assessment may not be suitable for all coalition game scenarios.  In the following sections, we will discuss various generalizations. However, for most general setups, we note that  the analogous value characterization result might not be available. In this paper,   we will be content to showcase through Theorem \ref{main}  that, in some cases, such characterization is possible.
\end{remark}

\begin{remark}[Characterization of the Shapley value via potential function approach] \label{moreaxioms} 

Let $v_\es \in \cG_\es$ denote the null game; $v_\es (\es) = 0$.  \citet{hart1989potential} defines a potential function $P : \cG \to \R$ as satisfying $P(v_\es) = 0$ and the following condition
\be\label{HaMa1}
\sum_{i \in N} D^i P(v) = v(N) \ \text{ for all } v \in \cG_N,
\ee
where $D^i P(v) := P(v) - P(v_{-i})$ is called the marginal contribution of a player $i$ in a game $v$. The following is a potential-based characterization of the Shapley value.
\begin{theorem}[\citet{hart1989potential}]\label{HaMa0}
There exists a unique potential function $P$. For any $v \in \cG_N$, the payoff $\big(D^i P(v)\big)_{i \in N}$ coincides with the Shapley value of $v$.% $\big(\phi_i(v)\big)_{i \in N}$.
\end{theorem}
Noting that the formula \eqref{HaMa1} can be rewritten as
\be\label{HaMa2}
P(v) = \frac{1}{n} \bigg[ v(N) + \sum_{i\in N} P(v_{-i})\bigg],
\ee
we see that $P(v)$ is uniquely determined by the values of $P$ for the subgames of $v$. \citet{hart1989potential} also provides an explicit formula for the potential as 
\be\label{HaMa3}
P(v) = \E\bigg[\frac{n}{|S|} v(S)\bigg]
\ee
where the expectation is taken with respect to a probability distribution $p$ over $2^N$, defined as $p(S) :=1/n \cdot 1/ {n \choose s} =  \frac{s! (n-s)!}{n!n}$, where $s = |S|$, $n = |N|$. This allows one to interpret the potential as the expected normalized worth.

According to Hart and Mas-Colell, Theorem \ref{HaMa0} can be viewed as a characterization of the Shapley value based solely on one axiom \eqref{HaMa1}. They also note that 
$P$ serves as a formal mathematical potential, with the Shapley value as its gradient.

Concerning the allocation of value of a game $v \in \cG_N$ for partial coalitions $T \subsetneq N$, the allocation scheme proposed by Hart and Mas-Colell based on potential function differences is proven to be equivalent to the extended Shapley value. Consequently, it faces the same limitation of not fully capturing the entire game structure. In contrast, $\Phi$  thoroughly explores the game structure by integrating players' marginal value along each general coalition path. This leads to an allocation of the collaborative reward $v(T)$ that differs from the Shapley value, as illustrated in \eqref{glovegameextended} for example.
\end{remark}

\section{Hodge-Shapley value}\label{valuegeneralized}

The null-player axiom, which states that a player who contributes no marginal value to any coalition receives nothing, is crucial in determining the Shapley value.

It is worth noting that the Shapley value \eqref{eqn:shapleyPermutation} for player $i$ can be rewritten as
\begin{align}
  \label{eqn:shapleyPermutation1}
  \phi _i (v) = \frac{ 1 }{ n ! } \sum _{\sigma} \bigg[ \sum_{j \in N} \p_i v \big(S^{\sigma}_j, S^{\sigma}_j \cup \{ j \}  \big) \bigg],
    \end{align}
because the only nonzero term in the path-integral, the sum over $j$ in the bracket, is when $j = i$. This indicates that the role of the coalition game $v$ is simply yielding the marginal contribution of player $i$ as the form $\p_i v$. Although opting for player $i$'s marginal value in this manner might initially appear reasonable, particularly given Shapley's null player axiom, we now contend that it represents just one of several possibilities. The only essential attribute $\p_i v$ may have is that it belongs to the space of edge flows
\be\label{edgeflow}
\ell^2(\cE) = \big\{ f : \overline \cE \to \R \, \big| \, f \bigl(S \cup \{ i \}, S \bigr) = - f \bigl( S , S \cup \{ i\} \bigr) \text{ for all }  i \in N,\, S \subset N \setminus \{i\} \big\}.
\ee
An edge flow $f$ is thus a function on the edges satisfying the sign-changing property. Now we shall define the player $i$'s marginal value as an arbitrary edge flow $f_i \in \ell^2(\cE)$, and contend that this is a practically relevant generalization because, for example, even when only some of the players make progress at a given cooperative stage, the reward is usually distributed to all players in the cooperation in practice.

When player $i$'s marginal value is given by $f_i \in \ell^2(\cE)$, a natural generalization of the Shapley value can now be given by replacing $\p_i v$ with $f_i$ in  \eqref{eqn:shapleyPermutation1}, yielding
\begin{align}
  \label{eqn:shapleyPermutation2}
  \phi _{f_i} := \frac{ 1 }{ n ! } \sum _{\sigma} \sum_{j \in N} f_i \big(S^{\sigma}_j, S^{\sigma}_j \cup \{ j \}  \big).
  \end{align}
Note that $f_i \big(S^{\sigma}_j, S^{\sigma}_j \cup \{ j \}  \big)$ is not necessarily zero even when $j \neq i$. Furthermore, the coalition game $v $ is no longer present in the formula.

The average path integral formula \eqref{value} also naturally generalizes as
\be\label{value1}
\Phi_{f_i} (T) = \int_\Omega  \sum_{t=1}^{\tau_{T}(\omega)} f_i \big(X_{t-1}(\omega), X_t(\omega) \big) d \cP(\omega) = \E\bigg[ \sum_{t=1}^{\tau_{T}} f_i \big(X_{t-1}, X_t \big)\bigg],
\ee
again allowing us to interpret $\Phi_{f_i}(T)$ as the player's expected total contribution, thus her fair share, given that the coalition state advances from $\emptyset$ to $T$ and the player $i$'s marginal contribution for each transition is now given by an  edge flow $ f_i$.

Let $\mathbf{f}= (f_1,...,f_n)$ denote the marginal values of players, $\phi_{\bf f} := (\phi_{f_i})_{i \in N}$, and let $\Phi_{\bf f} := (\Phi_{f_i})_{i \in N} : 2^N \to \R^{n}$ define the value allocator associated to $\bf f$. 
\begin{theorem}\label{coincide2}
Let $G= (\cV, \cE)$ be the coalition graph and ${\bf f} \in \big(\ell^2(\cE)\big)^{n}$ denote the marginal values of players. Then $  \phi _{\bf f} = \Phi_{\bf f} (N)$.
   \end{theorem}
   
 \begin{proof}
Fix $i \in N$ and $S \subset N$ with $|S| \le n - 1$. Observe the map $f_i \in \ell^2(\cE) \mapsto \phi_{f_i}$ is linear. By linearity, it is enough to prove the proposition when $f_i  =  \chi _{ ( S, S \cup \{ k \} ) }$ for any fixed $k \in N \setminus S$, where $\chi _{ ( S, S \cup \{ k \} )}$ is the indicator equal to $1$ at $( S, S \cup \{ k \} ) $ (thus $-1$ at $(S \cup \{ k \}, S ) $), and $0$ on all other edges in $\overline \cE$. First, observe \eqref{eqn:shapleyPermutation2} yields
\be\label{good1}
\phi_{ \chi _{ ( S, S \cup \{ k \} ) }} =  \frac{ |S|! ( n - |S| - 1) ! }{ n ! }.
\ee
Next, define an edge flow $\displaystyle g := \sum _{ \lvert T \rvert = |S|,\, j \notin T }  \chi  _{ ( T, T \cup \{ j \} ) } $, and observe
\begin{align}
\Phi_g (N) &= \sum _{\lvert T \rvert = |S|,\, j \notin T }  \Phi_{\chi  _{ ( T, T \cup \{ j \} )}} (N) \\
&= \binom{n }{|S|} \bigl( n - |S| \bigr) \Phi_{ \chi _{ ( S, S \cup \{ k \} ) }}(N) \nn
\end{align}
where the second equality is by the fact $\Phi_{\chi  _{ ( T, T \cup \{ j \} )}} (N) = \Phi_{\chi  _{ ( S, S \cup \{ k \} )}} (N)$  due to the symmetry of the coalition graph \eqref{oldG} and  the random coalition process \eqref{MC}. 

Noting that $\binom{n }{|S|} \bigl( n - |S| \bigr) = \frac{ n ! }{ |S| ! (n - |S| - 1) ! }$, it remains to show $\Phi_g (N)=1$. By the definition of the flow $g$, it is clear that for any sample path $\omega$ of the coalition process, we have the following pathwise equality (recalling $X_0= \emptyset$)
\be
 \sum_{t=1}^{\tau_{N}(\omega)} g \big(X_{t-1}(\omega), X_t(\omega) \big) =1. \nn
\ee
Hence 
$\Phi_g (N) = \E[ \sum_{t=1}^{\tau_{N}} g (X_{t-1}, X_t )] = 1$, proving the theorem.
\end{proof}
Theorem \ref{coincide1} becomes a special case when $f_i = \p_i v$. In light of the theorem, we may call $ \phi_{\bf f}$ the Hodge-Shapley value associated to ${\bf f}$, %\footnote{Section \ref{AvePoisson} gave me inspiration for the term Hodge-Shapley value.}, 
with $ \Phi_{\bf f}$ its extension to all coalitions. 

An example of less strict marginal value assignment may be given as follows. Given $\al \in \R$ and a game $v \in \cG_N$, we define player $i$'s marginal value by (cf. \eqref{partialdifferential})
\begin{equation}\label{modifiedmarginalvalue}
f_{\al, i} \bigl( S , S \cup \{ j \} \bigr) :=
    \begin{cases}
\al \big( v(S \cup \{ i \}) - v(S) \big) & \text{if } \ j=i, \\
  \frac{ (1 -  \al)}{n -1} \big( v(S \cup \{ j \}) - v(S) \big) & \text{if } \  j \neq i.
    \end{cases}
  \end{equation} 
Notice $f_{\al, i} = \p_i v$ if $\al=1$. For $\al \in (0,1)$, the marginal value \eqref{modifiedmarginalvalue} %\eqref{modifiedmarginalvalue} 
is such that for the transition from $S$ to $S \cup \{i\}$, player $i$ receives the $\al$ proportion of the marginal value $ v(S \cup \{ i \}) - v(S)$, and $(1-\al) \big(v(S \cup \{ i \}) - v(S) \big)$ is equally distributed to the rest of the players. We may call $\phi_{{\bf f_\al}} =( \phi _{f_{\al, i}})_{i \in N} $ the $\al$-Shapley value, with $ \Phi_{{\bf f_\al}} = (\Phi_{f_{\al,i}})_{i \in N} : 2^N \to \R^{n}$ its extension. The null-player axiom may not hold for the $\al$-Shapley value.

\begin{example} Let $N=2$, and $v \in \cG_{\{1,2\}}$ be given by $
v (\emptyset) = v(\{2\}) = 0$, $ v (\{1\}) = v(\{1,2\}) = 1$. Note that $\p_2 v = 0$, thus the Shapley value $\phi_2 (v) = 0$ for player 2. On the other hand, the $\al$-Shapley value for player 2 can be easily calculated as $1 - \al$. Player 2 continues to receive the $1-\al$ portion of the grand coalition value.
\end{example}

\begin{example}\label{glove2}
We revisit the glove game from Example \ref{ex:introGlove} and calculate the $\al$-Shapley value  \eqref{eqn:shapleyPermutation2}. %and \eqref{value2}, but this time with modified marginal values of players \eqref{modifiedmarginalvalue}. 
For this, we need to calculate (recall $N = \{1,2,3\}$)
\begin{align}
  \label{modifiedshapley}
\phi_{\al, i} (v) = \frac{ 1 }{ 6 } \sum _{\sigma} \big[ f_{\al, i} (\emptyset, \{\sigma(1)\}) + f_{\al, i}(\{\sigma(1)\}, \{\sigma(1), \sigma(2)\} )+ f_{\al, i} (\{\sigma(1), \sigma(2)\} , N ) \big], \nn
  \end{align}
  where $ f_{\al, i} (\emptyset, \{\sigma(1)\}) + f_{\al, i} (\{\sigma(1)\}, \{\sigma(1), \sigma(2)\} )+ f_{\al, i} (\{\sigma(1), \sigma(2)\} , N )$ represents the total contribution of player $i$ along the coalition path $\sigma$.
For example, if $\sigma = (1,2,3)$ (that is, the player $1$ joins first, followed by the players $2$ and $3$), this sum equals $\frac{1-\al}{2}$ for $i=1,3$ and $\al$ for $i=2$, because a pair of gloves is made precisely when the player $2$ joins in this path. Thus, player $1$ contributes marginal value $\frac{1-\al}{2}$ when joining the coalition first (2 of 6 permutations) and marginal value $\al$ otherwise (4 of 6 permutations), so $\phi _{\al, 1} (v) = \frac{ 1 + 3\al }{ 6 } $. Similarly, $\phi _{\al, 2} (v) =  \phi _{\al,3} (v) = \frac{ 5-3\al }{ 12 }$. This allocation coincides with the Shapley value if $\al=1$, and player $1$ receives more than players $2,3$ if and only if $\al > \frac13$.

In Section \ref{AvePoisson}, we will explain how to calculate the value \eqref{value1} in general and, as an example, obtain the following extended allocation table for the $\al$-Shapley value:
\be\label{alphashapley}
\begin{tabularx}{0.913\textwidth}
{ 
  | >{\centering\arraybackslash}X 
    | >{\centering\arraybackslash}X 
  | >{\centering\arraybackslash}X 
  | >{\centering\arraybackslash}X 
    | >{\centering\arraybackslash}X 
    | >{\centering\arraybackslash}X 
  | >{\centering\arraybackslash}X 
  | >{\centering\arraybackslash}X 
  | }
\hline 
 & $\{1\}$ & $\{2\}$ & $\{3\}$ & $\{1,2\}$ & $\{1,3\}$ & $\{2,3\}$ & $\{1,2,3\}$  \\
\hline 
$\Phi_{f_{\al, 1}}$ & $\frac{15 \al - 5 }{ 24} $ & $ \frac{5 - 15 \al }{ 48}$ & $ \frac{5 - 15 \al }{ 48}$ & $ \frac{ 7\al + 3}{16 }$ &$  \frac{ 7\al + 3}{16 }$ &$ \frac{ 1 - 3\al}{ 8}$ & $\frac{3\al + 1}{ 6}$  \\
\hline 
$\Phi_{f_{\al, 2}}$ &$\frac{5 - 15 \al }{ 48}$ & $\frac{3\al - 1 }{12 } $ & $ \frac{3\al - 1}{48 } $ & $  \frac{ \al + 5}{ 16 }  $ & $\frac{ 1 - \al}{ 2} $ & $\frac{3\al -1 }{16 } $ & $\frac{5 - 3\al }{ 12} $ \\
\hline
$\Phi_{f_{\al, 3}}$ & $\frac{5 - 15 \al }{ 48} $ & $ \frac{3\al - 1 }{ 48} $ & $\frac{ 3\al - 1}{ 12}$ & $ \frac{1 - \al }{2 }$ & $ \frac{ \al + 5}{16 }$ & $\frac{3\al -1 }{16 } $ & $\frac{5 -3\al}{ 12} $ \\
\hline
\end{tabularx}
\ee
We see that $\Phi_{f_{\al,i}}(N) = \phi_{\al, i}(v)$; the extended allocation at the grand coalition coincides with the $\al$-Shapley value, as claimed in Theorem \ref{coincide2}. 

\end{example}

\begin{remark}[Population Monotonic Allocation Schemes] 

\citet{sprumont1990population} argues that to address the possibility of partial cooperation effectively, it is necessary not only to determine the allocation of the grand coalition value $v(N)$ but also to determine how to allocate the value of every coalition $S$ in case players do not fully cooperate and $S$ is eventually formed. His concern is to ensure that once a coalition $S$ has agreed upon an allocation of $v(S)$, no player will be enticed to form a smaller coalition than $S$ through bargaining or other means. This requirement translates into the necessity for each player's payoff to increase as the coalition to which they belong grows larger. Consequently, Sprumont seeks a Population Monotonic Allocation Scheme (PMAS).%, which is defined as follows.
\begin{definition} A vector $x = (x_{iS})_{i \in S, S \subset N}$ is a population monotonic allocation scheme ({\rm PMAS}) of the game $v \in \cG_N$ if $x_{i\es} = 0$ for all $ i\in N$ and $x$ satisfies the following:
\begin{align}
\sum_{i \in S} x_{iS} = v(S) \,\text{ for all }\, S \subset N, \ \text{ and } \ 
x_{iS} \le x_{iT} \,\text{ for all }\, i \in S \subset T. \nn
\end{align}
\end{definition}
Sprumont then verifies sufficient conditions for a game $v$ to possess a PMAS, which includes the classes of quasiconvex games\footnote{$v$ is quasiconvex if $\sum_{i \in S} \big(v(S) - v(S \setminus \{i\})\big) \le \sum_{i \in S} \big(v(T) - v(T \setminus \{i\})\big)$ for all $ S \subset T$.} and Increasing Average Marginal Contributions (IAMC) games\footnote{$v$ is an IAMC game if $\frac{1}{|S|} \sum_{i \in S} \big(v(S) - v(S \setminus \{i\}) \big) \le \frac{1}{|T|} \sum_{i \in T} \big(v(T) - v(T \setminus \{i\})\big)$ for all $S \subset T$.
}, as well as the following equivalent condition.
\begin{theorem}[\citet{sprumont1990population}]
A game has a {\rm PMAS} if and only if it is the sum of a positive linear combination of monotonic simple games with veto control \footnote{$v$ is monotonic if $v(S) \le v(T)$ for all $S \subset T$. $v$ is a simple game if $v(S) \in \{0,1\}$ for all $S \subset N$. $i \in N$ is a veto player in a game $v$ if $v(S) = 0$ for all $S \subset N \setminus \{i\}$. A game with veto control is a game with at least one veto player.} and an additive game $v$, given by $v_a(S) := a |S|$ for all $S \subset N$, where $a \in \R$ is arbitrary. 
\end{theorem}
Sprumont also shows that, while every game with a PMAS is totally balanced, not every totally balanced game has a PMAS. For example, every assignment game is totally balanced, but many of them lack a PMAS.  \citet{abe2023core} shows that the Shapley value exists within the core of a game $v$ if and only if $v$ can be expressed as the summation of a linear combination of basic games meeting specific criteria. Geometrically, these basic games correspond to the extreme edges of a polyhedral structure. 

In summary, PMAS serves as a multivalued solution concept aimed at determining how to distribute the value of each coalition $S$. In contrast, our allocation scheme $\Phi$ in \eqref{value1} is single-valued. Investigating the characteristics of ${\bf f}$ that result in an allocation satisfying reasonable economic properties remains an intriguing avenue for future research. For instance, imposing ${\bf f} \ge {\bf 0}$ on $\cE$ (thus ${\bf f} \le {\bf 0}$ on $\cE_-$) seems pertinent to Sprumont's consideration (though not equivalent), since the condition implies no player possesses an immediate incentive to depart from the existing coalition.

\end{remark}

\begin{remark}[The egalitarian Shapley values] Given $v \in \cG_N$, let ${\rm ED}_i(v) := \frac{v(N)}{n}$ denote the equal division value. For $\al \in \R$ and $i \in N$, the convex combination of the Shapley value and the equal division value
\be\label{egal}
{\rm ES}^\al_i (v) := \al \phi_i(v) + (1-\al) {\rm ED}_i(v)
\ee
is called an egalitarian Shapley value (\citet{joosten1996dynamics}) for $0 \le \al \le 1$. Egalitarian Shapley values meet efficiency, symmetry, and linearity, but they do not meet the null player property if $\al \neq 1$. However, they satisfy the following weaker property
\vspace{1mm}

\noin\textbf{Null player in a productive environment} ($\mathsf{NPE}$): If $v(N) \ge 0$ and $i$ is a null player, i.e., $\p_i v \equiv 0$, then player $i$'s payoff $\varphi_i(v)$ is nonnegative.%$\varphi_i(v) \ge 0$.
\vspace{1mm}

$\mathsf{NPE}$ states that when the entire society is productive ($v(N)\ge 0$), null players should not receive negative payoffs (because they do not cause harm to the society). \citet{casajus2013null} claims that if we no longer require null players' payoffs to be zero, the nature of solidarity emerges in how null players are treated. Since null players are completely unproductive, their ``selfish'' payoffs, or Shapley payoffs, are zero. As a result, any nonzero payoff must be due to solidarity among the players. To prevent players from receiving unreasonable payoffs corresponding to the case $\al < 0$ in \eqref{egal}, Casajus and Huettner invokes the following axiom.
\vspace{1mm}

\noin{\bf Desirability:} If $v(S \cup \{i\}) \ge v(S \cup \{j\})$ for all $S \subset N \setminus \{i,j\}$, then $\varphi_i(v) \ge \varphi_j(v)$.
\vspace{1mm}

Desirability compares two players in a game to ensure that their payoffs do not contradict their productivities as measured by marginal contributions. 

A characterization of the class of egalitarian Shapley values is now given as follows.
\begin{theorem}[\citet{casajus2013null}] A value $\vp$ satisfies  efficiency, linearity, desirability, and the null player in a productive environment property if and only if there exists an $\al \in [0,1]$ such that $\vp = {\rm ES}^\al$.
\end{theorem}
Our definition of the marginal value \eqref{modifiedmarginalvalue} clearly indicates that the $\al$-Shapley value is related to the egalitarian Shapley value. To see this, we compute
\begin{align*}
\phi_{f_{\al,i}} &= \frac{ 1 }{ n ! } \sum _{\sigma} \sum_{j \in N} f_{\al, i} \big(S^{\sigma}_j, S^{\sigma}_j \cup \{ j \}  \big) \\
&= \frac{ 1 }{ n ! }\bigg[ \al  \sum _\sigma \Bigl( v \bigl( S^{\sigma}_i \cup \{ i \} \bigr) - v (S^{\sigma}_i ) \Bigr) +  \frac{ 1 -  \al }{n -1} \sum_{j \neq i} \sum_{\sigma} \Bigl( v \bigl( S^{\sigma}_j \cup \{ j \} \bigr) - v (S^{\sigma}_j) \Bigr)\bigg] \\
&= \al \phi_i(v) + \frac{ 1 -  \al}{n -1} \sum_{j \neq i} \phi_j(v)\\
&= \al \phi_i(v) + \frac{ 1 -  \al }{n -1} \big( v(N) - \phi_i(v) \big) \\
&= \frac{ n \al - 1}{n -1} \phi_i(v) + \frac{n(1-\al)}{n-1}\frac{v(N)}{n}\\
& = {\rm ES}^{\al'}_i(v), \ \text{ where }\, \al' = \frac{ n \al - 1}{n -1}.
\end{align*}
This demonstrates that the $\alpha$-Shapley value aligns with the $\alpha'$-egalitarian Shapley value. Consequently, $(\Phi_{f_{\alpha,i}})_{i \in N}$ serves as a microfoundation for the egalitarian Shapley value, and extends it to all coalitions $T \subset N$, as indicated by Theorem \ref{coincide2}.

\end{remark}

\section{General cooperative networks}\label{Hodgeallocation1}

We have extended Shapley's cooperative value allocation theory by considering the coalition game graph \eqref{oldG}, generalized marginal values represented by edge flows \eqref{edgeflow}, and the allocation operator as the average path integral of flows \eqref{value1}. We now observe that these concepts can be extended to more general network structures. 
This leads us to consider a general cooperative game graph $G=(\cV, \cE)$, which is a general connected graph with $\cV$ a finite set of cooperative states and $\cE$ a set of edges. Each $S \in \cV$ is not necessarily a subset of $N$, but $\cV$ now represents an arbitrary finite set, with each  $S \in \cV$ describing a general cooperative situation. For example,  $S,T \in \cV$ 
may both represent cooperations among the same group of players but working under different conditions.

As an example of the situation of interest, we may consider the following: Let $\cV$ represent the set of all possible states of a given project, in which the project manager, or principal, wishes to reach the project completion state $F \in \cV$. The project state can move from $S$ to $T$ with a certain probability, if there is an edge between them. For the project's advancement, the manager hires $N$ agents, or employees. 

We now ask the following question: Given the principal's reward function in each state and her payoff function to agents at each state transition, what is her expected revenue when the project is completed, and what is her expected liability to each agent?

The question leads us to consider a general cooperative process $(X_t)_{t \in \N_0}$ valued on the state space $\cV$ with a given initial state $X_0 = O \in \cV$. Let $(\Omega, \cF, \cP)$ represent the probability space. In general, the governing law $\cP$ of the process $(X_t)_{t \in \N_0}$ can be arbitrary, except for the requirement that $X_{t+1}$ be one of the adjacent states of $X_t$.

\begin{figure}
\centering
\begin{subfigure}{.9\textwidth}
  \centering
  \includegraphics[width=0.7\linewidth]{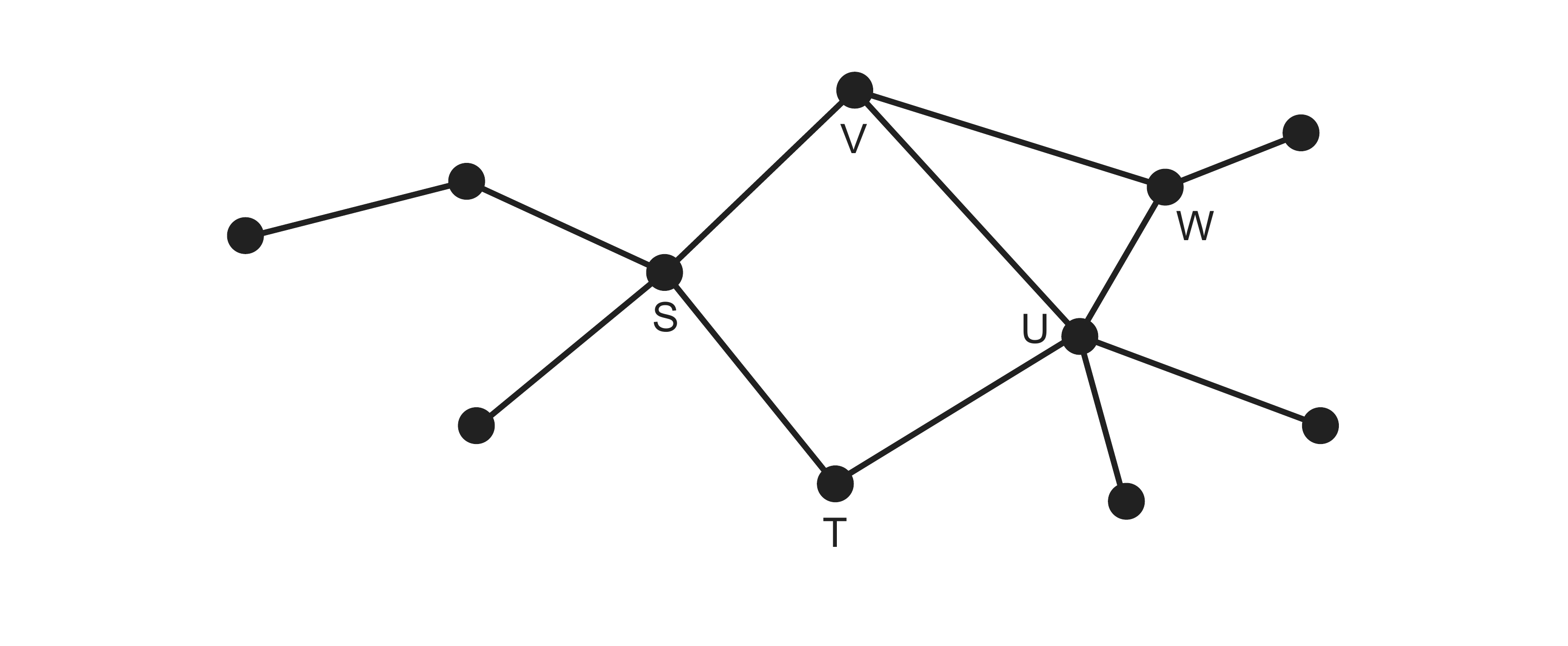}
  %\caption{A subfigure}
 % \label{fig:sub1}
\end{subfigure}%

\caption{A cooperative graph can have an arbitrary structure.\\ Each edge has a forward and reverse direction associated to it.}
\label{figure4}
\end{figure}
Let $(S,T) \in \cE$ denote a forward edge directed from $S$ to $T$, with its reverse $(T,S) \in \cE_-$. Set $\overline \cE := \cE \cup \cE_-$.\footnote{Thus $\cE \cap \cE_- = \emptyset$, and for each $S \neq T$ in $\cV$, either $(S,T) \in \cE$, or $(S,T) \in \cE_-$, or $(S,T) \notin \overline\cE$.} The above requirement now reads $(X_t, X_{t+1}) \in \overline \cE$. 

 Let $\ell^2(\cE)$ denote the set of all edge flows $f: \overline\cE \to \R$ satisfying the sign changing property $f(T,S) = - f(S,T)$. Motivated from Section \ref{valuegeneralized}, we continue to assume that each agent $i \in N$ is associated with an edge flow $f_i \in \ell^2(\cE)$, which represents the agent $i$'s marginal value. We then define the path integral %the agent $i$'s total contribution along the sample path $\omega$ from $O$ to $T$ by
\be\label{pathintegral2}
{\cal I}_{f_i}(T) = {\cal I}_{f_i}(T)(\omega) := \sum_{t=1}^{\tau_{T}(\omega)} f_i  \big(X_{t-1}(\omega), X_t(\omega) \big)
\ee
where $\tau_{T} = \tau_{T}(\omega) \in \N$ denotes the first  time the process $\big(X_t(\omega)\big)_{t \in \N_0}$ visits $T$. Given that the project state has progressed from $O$ to $T$ along the path $\omega$, \eqref{pathintegral2} represents the agent $i$'s total contribution throughout the progression. The value allocation function can then be analogously defined as the average path integral
\be\label{value2}
\Phi_{f_i} (T) := \int_\Omega  {\cal I}_{f_i}(T)(\omega)  d \cP(\omega) = \E[ {\cal I}_{f_i}(T)] \ \text{ for each } T \in \cV.
\ee
$\Phi_{f_i} (T)$ represents the agent's expected total contribution given the  state advances from $O$ to $T$, where $f_i$ represents the agent $i$'s marginal contribution for each transition.

\begin{remark}[Initial state]

In coalition games, the vacant coalition $\emptyset$ usually serves as the starting point for coalition progression. However, in a general cooperative network, any state may serve as the starting point. We use ${\cal I}^S_{f_i}(T)$ and $\Phi^S_{f_i}(T)$ to represent the path integral and its expected value similar to \eqref{pathintegral2} and \eqref{value2}, with the distinction that the superscript denotes the initial state, i.e. $X_0 = S$, while $T$ denotes the terminal state as before. We will drop the superscript when $X_0 = O$. 

Importantly, the identity below will be useful and proven later
\be\label{transition}
 \Phi_f^S(T) +  \Phi^T_f(U) =  \Phi^S_f(U)
 \ee
for any $f \in \ell^2(\cE)$ and $S,T,U \in \cV$. In particular, $ \Phi_f^T(T) = 0$ (by setting $U=T$), and $ \Phi_f^S(T) = -  \Phi_f^T(S)$ (by setting $U=S$). Note that given a fixed initial state $O$, this allows us to represent $\Phi_f^S$ in terms of $\Phi_f$ as $\Phi^S_f(T) = \Phi_f(T) - \Phi_f(S)$.
\end{remark}

We can now provide a general answer to the question. Let $\cV$ represent the project state space in which the manager wishes to achieve the project completion state $F \in \cV$. Let $v : \cV \to \R$ denote the manager's revenue, i.e., $v(U)$  represents the manager's final revenue if the project ends at the state $U$. Let $N = \{1,...,n\}$ denote the employees with their marginal contributions $f_1,...,f_n \in \ell^2(\cE)$. Because it is her contribution and share, the manager must pay $f_i (S,T)$ to the employee $i$ at each state transition from $S$ to $T$. Thus, the manager's surplus in this single transition is given by $v(T) - v(S) - \sum_i f_i (S,T)$. Now the manager's revenue problem is: What is the manager's expected revenue if they begin at the initial project state $O$ (with $v(O)=0$), and the manager's goal is to reach the project completion state $F$?

We can observe that the answer is $v(F) - \sum_i \Phi_{f_i}(F)$, where $\Phi_{f_i}$ is given by \eqref{value2}. (So if this is negative, the manager may decide not to begin the project at all.)

Furthermore, in the middle of the project, the manager may want to recalculate her expected gain or loss. That is, suppose the current project status is $T$, and they arrived at $T$ via a specific path $\omega$, and thus the manager has paid the payoffs, i.e., the path integrals \eqref{pathintegral2}, to the employees. The manager may wish to recalculate the expected gain if she decides to proceed from $T$ to $F$. This is now provided by
\[
v(F) - v(T) - \sum_i \Phi^T_{f_i} (F) = v(F) - v(T) -  \sum_i \big(\Phi_{f_i} (F) - \Phi_{f_i} (T) \big),
\]
and the manager can make decisions based on the expected revenue information. 

\begin{remark}[Efficiency]\label{remark:efficiency}
Shapley's efficiency axiom is a crucial ingredient for characterizing his value allocation scheme; without it, it is difficult to establish the uniqueness of the allocation. The efficiency axiom is equally important for our charaterization result (Theorem \ref{main}) to yield a unique allocation for all coalitions.

Our description of the principal-agent allocation problem, on the other hand, shows that efficiency is merely a constraint, which is equivalent to declaring that the principal's marginal surplus $v(T) - v(S) - \sum_i f_i (S,T)$ is identically zero for every $(S,T) \in \overline\cE$. The principal enters the problem as soon as we relax this vanishing constraint, and $v: \cV \to \R$ then represents her value at each cooperative state. The crucial difference between principal and agents is that the value of the principal is represented as a function $v$ on $\cV$, whereas the (marginal) value of the agents is represented as edge flows $(f_i)_{i \in N}$ on $\overline\cE$. Finally, we can recover the efficiency $\sum_i \Phi_{f_i}(T) = v(T)$ for all $T \in\cV$ by imposing the following marginal efficiency condition 
\be\label{marginalefficiency}
v (T) - v(S) = \sum_i f_i (S,T) \ \text{ for every } (S,T) \in \overline\cE.
\ee
For example, the $\al$-Shapley value \eqref{modifiedmarginalvalue} satisfies this condition, hence is efficient.
\end{remark}

\begin{remark}[Myerson's conference structures and fair allocation rules]\label{Meyerson}
\citet{myerson1977graphs, myerson1980conference} considers how the outcome of a cooperative game should be determined by which groups of players hold cooperative planning conferences. He suggests an allocation rule, which are functions that map conference structures to payoff allocations. The rule is characterized by a notion of fairness, which we briefly explain. 

Let $N=\{1,...,n\}$ be the set of players. Let $V : 2^N \to 2^{\R^n}$ be a (set valued) function such that i) $V(S)$ is a closed subset of $\R^n$, ii) $\es \neq V(S) \neq \R^n$ if $S \neq \es$ ($V(\es) = \R^n$), and iii) if $x \in V(S)$, $y \in \R^n$ and $ y_i \le x_i$ for all $i \in S$, then $y \in V(S)$. The set $V(S)$ is interpreted as the set of all payoff allocations that provide members of $S$ with a combination of payoffs that they can guarantee for themselves without cooperating with the other players. In \citet{myerson1980conference}, such a $V$ is referred to as a game.

For any $S \subset N$, let $\p V(S)$ be the weakly Pareto-efficient frontier of $V(S)$, i.e.,
\be
\p V(S) = \{x \in V(S) \, | \ \text{if } y_i > x_i \text{ for all } i \in S, \text{ then } y \notin V(S) \}. \nn
\ee
Any $S \subset N$ with $|S| \ge 2$ is called a conference. A conference structure is then any collection of conferences. Let CS denote the set of all possible conference structures
\be
{\rm CS} = \{Q \, | \, S \subset N \text{ and } |S| \ge 2 \text{ for all } S \in Q\}. \nn
\ee
Players $i$ and $ j$ are connected by $Q$ if $i=j$ or there exists a sequence of conferences $\{S_1,...,S_m\} \subset Q$ such that $i \in S_1$, $j \in S_m$ and $S_k \cap S_{k+1} \neq \es$ for all $k=1,...,m-1$. Then $N / Q$ denotes the partition of $N$ defined by this connectedness relation, i.e.,
\be
N/Q = \big\{ \{ j \, | \, i \text{ and } j \text{ are connected by } Q\} \, \big| \, i \in N \big\}. \nn
\ee
That is, the sets in $N/Q$ are the maximal connected coalitions determined by $Q$. 

An allocation rule for the game $V$ is any function $\Psi : {\rm CS} \to \R^n$ such that
\be
\Psi(Q) \in \p V(S) \ \text{ for every }\, Q \in {\rm CS},\, S \in N/Q.
\ee
This condition asserts that if $S$ is a maximally connected coalition for the conference structure $Q$, then its members should coordinate themselves to achieve a (weakly) Pareto-efficient allocation among the allocations available to them. 

For $i \in N$, $S \subset N$, the conference structures $Q - S$ and $Q -^* i$ are defined by
\be
Q - S = \{ T \, | \, T \in Q \text{ and } T \neq S\}, \q Q -^* i = \{T \, | \, T \in Q \text{ and } i \notin T\}. \nn
\ee
So $Q - S$ is the conference structure that differs from $Q$ in that $S$ is removed from the list of permissible conferences, whereas $Q -^* i$ is the conference structure that differs from $Q$ in that all conferences containing player $i$ are eliminated.

Myerson defines that an allocation rule $\Psi : {\rm CS} \to \R^n$ is fair if
\be\label{fair}
\Psi_i(Q) - \Psi_i (Q-S) = \Psi_j(Q) - \Psi_j (Q-S) \ \text{ for all } Q \in {\rm CS}, S \in Q, i \in S, j \in S.
\ee
$\Psi(\cdot)$ is fair if the members of $S$ decide not to meet together, the change in conference structure (from $Q$ to $Q-S$) should have an equal impact on all members of $S$. 

On the other hand, an allocation $\Psi : {\rm CS} \to \R^n$ has balanced contributions if
\be\label{balanced}
\Psi_i(Q) - \Psi_i (Q-^* j) = \Psi_j(Q) - \Psi_j (Q-^* i) \ \text{ for all } Q \in {\rm CS}, i \in S, j \in S.
\ee
$\Psi(\cdot)$ has balanced contributions if $j$'s contribution to $i$  equals $i$'s contribution to $j$ in any conference structure. Now a main result of \cite{myerson1980conference} is the following.
\begin{theorem}[\citet{myerson1980conference}]
There exists a unique fair allocation rule $\Psi(\cdot)$ for the game $V$. This allocation rule also has balanced contributions. Conversely, if $\Psi(\cdot)$ has balanced contributions, then $\Psi(\cdot)$ is a fair allocation rule.
\end{theorem}
The conference structure $Q$ appears relevant to our framework as it can be interpreted as a graph $\cG$, with each node being either a conference $ S \in Q$ or a singleton $\{i\}$ for each $\displaystyle{i \notin \cup_{S \in Q} S}$. Let two conferences $S$ and $T$ in $Q$ be connected by an edge if $S \cap T \ne \es$, while every singleton $\{i\}$ with $\displaystyle{i \notin \cup_{S \in Q} S}$ be isolated in $\cG$. Notice then each coalition in $N/Q$ corresponds to the union of all nodes in a maximal connected subgraph of $\cG$. In this viewpoint, Myerson's conference structures differ from the coalition graphs \eqref{oldG}, representing a more general cooperative networks (while its nodes continue to be restricted to coalitions).  Myerson's allocation rule $\Psi$ depends on both the game $V$ and the conference structure $Q$. This dependence arises from the assumption that players will form the largest feasible coalitions aligning with the conference structure. $\Psi$ is characterized by the fairness axiom, resulting in an implicit allocation. In contrast, our allocation $\Phi$ is determined by the cooperative network $\mathcal{G}=(\mathcal{V}, \mathcal{E})$, the law $\mathcal{P}$ governing the cooperative process $(X_t)_{t \in \mathbb{N}_0}$, the marginal values of players ${\bf f} = (f_i)_{i \in N}$, and the arbitrary initial and terminal cooperative states $O$ and $T$ in $\mathcal{V}$. As a result, $\Phi$ provides a flexible allocation framework, which is explicitly represented by the formula \eqref{value2}. 
\end{remark}

\section{Reversible Markov chains and Poisson's equations on graphs}\label{AvePoisson}

The preceding discussions naturally lead to the question of how to evaluate the value function \eqref{value2}, which represents an infinite sum of all possible paths between states. As the network gets more complicated, this can quickly become intractable.

In this section, we establish the relationship between the value function \eqref{value2} and the Poisson's equation on graphs, when the cooperative process follows an important class of probability laws, known as reversible Markov chains. This will allow us to compute \eqref{value2} by solving a system of linear equations, which is tractable.

To this end, we need to introduce basic linear operators, such as gradient, divergence, and Laplacian, between linear function spaces $\ell^2(\cV)$ and $\ell^2(\cE)$. We refer to \citet{lim2020hodge} for an accessible introduction to combinatorial Hodge theory. Recently, the combinatorial Hodge decomposition has found application in game theory across diverse contexts, including noncooperative games \citep{candogan2011flows}, cooperative games \citep{stern2019hodge}, and the ranking of social preferences \citep{jiang2011statistical}.

 Let $\ell^2(\cV)$ denote the space of functions  $ \cV \rightarrow \mathbb{R} $ with the standard inner product
\begin{equation}\label{inner1}
   \langle u , v \rangle \coloneqq \sum _{ S \in \cV }  u (S) v (S).
\end{equation}
We recall that these are called coalition games if $\cV = 2^N$ and $v(\emptyset) = 0$. In contrast, $\cV$ can now be an arbitrary finite set and $v \in \ell^2(\cV)$ is not required to assume $0$ anywhere. 

Let $\la : \overline{\cE} \to \R_+$ define the edge weight, satisfying $\la (T,S) = \la (S,T) \ge 0$ (i.e., no sign alternation) for all $S,T \in \cV$. We declare that there is an edge between $S$ and $T$, i.e., $(S,T) \in \overline{\cE}$, if and only if $\la (S,T) > 0$. Given an edge weight $\la$, we denote by $ \ell ^2_\la (\cE) $ the space of functions
$ \overline{\cE} \rightarrow \mathbb{R} $ equipped with the weighted inner product
\begin{equation}\label{inner2}
   \langle f , g \rangle_{\la} \coloneqq \sum _{ (S,T) \in \cE } \la (S,T) f (S,T) g (S,T).
\end{equation}
with the sign changing property $ f(T,S) = - f (S,T) $. Elements in $ \ell ^2_\la (\cE) $ are often called edge flows. Now the gradient operator $ \mathrm{d} \colon \ell ^2 (\cV) \rightarrow \ell_\la ^2 (\cE) $ is defined by
\begin{equation}\label{gradient}
  \mathrm{d} v ( S,T ) \coloneqq v (T) - v (S) \ \text{ for each } (S,T) \in \overline{\cE}.
\end{equation}
Given a game  $v$ on $\cV$, $ \mathrm{d} v$ measures its marginal value for each edge $(S,T) \in \overline\cE$. 

Let $ \mathrm{d}^* \colon \ell ^2_\la (\cE) \rightarrow \ell^2 (\cV) $ denote the adjoint of $ \mathrm{d}$. $ \mathrm{d}^*$ is called divergence operator, which is characterized by the defining relation for the adjoint operator
\be\label{adjointdefinition}
   \langle \mathrm{d}v, f \rangle_{\la} =    \langle v, \mathrm{d}^*f \rangle \ \text{ for every } v \in \ell^2(\cV) \text{ and } f \in \ell_\la^2(\cE).
\ee
It is important to note that $\mathrm{d}$ is defined by \eqref{gradient} and is independent of the edge weight $\la$. However, $ \mathrm{d}^*$ depends on the choice of $\la$ due to its defining relationship \eqref{adjointdefinition}, so it must be called a weighted divergence, but we simply call it divergence. In fact, \eqref{adjointdefinition} gives the following explicit form of the divergence
\begin{align}\label{div}
\mathrm{d}^*f (S) 
=  \sum_{T \sim S}\la (T,S) f(T,S) \ \text{ for every } S \in \cV,
\end{align}
where $ T \sim S $ means $\la(S,T) >0$, i.e., $S$ and $T$ are adjacent in the network. 

The Laplacian is the symmetric (self-adjoint) operator $ \mathrm{L} = \mathrm{d} ^\ast \mathrm{d} : \ell^2(\cV) \to \ell^2(\cV)$:%, whose explicit form is given by
\begin{align*}
\mathrm{L} v (S) =  \sum_{T \sim S}\la (S,T) \big( v(S) - v(T) \big).
\end{align*}
$\mathrm{L} v(S)$ calculates the weighted sum of $v$'s marginal increment directed to each state $S$.

Poisson's equation is a partial differential equation of broad utility with the form $\mathrm{L}u = h$, where $\mathrm{L}$ represents the Laplace operator. Typically, a function $h$ is given, and $u$ is sought. Given a weighted graph $\cG= (\cV, \cE)$ with weight $\la$ and an edge flow $f \in \ell^2_\la (\cE)$, we are interested in the following form of the Poisson's equation
\be\label{Poisson3}
\mathrm{L} u =  \mathrm{d} ^\ast f. 
\ee
The solution $u$ can be interpreted as follows: given an edge flow $f$, the potential function $u : \cV \to \R$ solving \eqref{Poisson3} is such that its gradient flow $\mathrm{d}u : \overline{\cE} \to \R$ is closest to the flow $f$. To see this, we recall the fundamental theorem of linear algebra:
\be\label{Hodge}
\ell^2(\cV) = \mathcal{R} ( \mathrm{d} ^\ast ) \oplus \mathcal{N} ( \mathrm{d} ) , \qquad \ell^2_\la(\cE) = \mathcal{R} ( \mathrm{d} ) \oplus \mathcal{N} ( \mathrm{d} ^\ast ),
\ee
where $\mathcal{R}(\cdot)$ and $\cN(\cdot)$ represent the range and nullspace, and $\oplus$ represents the orthogonal decomposition with respect to the inner products on $\ell^2(\cV)$ and $\ell^2_\la(\cE)$. 

Now given $f$, the equation $\mathrm{d} u = f$ can be solved only if $f \in \mathcal{R} (\mathrm{d})$. In general, a least squares solution to $\mathrm{d}u=f$ instead solves $\mathrm{d}u=f_1$ where $f=f_1+f_2$ with $f_1 \in \mathcal{R} (\mathrm{d})$ and $f_2 \in \mathcal{N} (\mathrm{d}^\ast)$. Applying $\mathrm{d}^\ast$ yields $\mathrm{d}^\ast \mathrm{d} u = \mathrm{d}^\ast f_1 = \mathrm{d}^\ast f$, which is \eqref{Poisson3}. 

We also note that any two solutions $u,v$ to \eqref{Poisson3} differ by a constant if $\cG$ is connected. This is because $\mathrm{L}u = \mathrm{L}v$ implies $\mathrm{d}u = \mathrm{d}v$ ($\mathrm{L}u = 0 \Rightarrow  \langle \mathrm{d}^*\mathrm{d}u, u \rangle = 0 \Rightarrow \langle \mathrm{d}u, \mathrm{d}u \rangle_\la = || \mathrm{d}u ||_{\ell^2_\la(\cE)}^2 = 0 \Rightarrow \mathrm{d}u = 0$). In particular, if $u$ is specified at a state, say $u(O) = u_0$ for some $O \in \cV$, then $u$ is uniquely determined by the equation \eqref{Poisson3}.

Let $(X_t)_{t \in \N_0}$ denote the cooperative process valued in $\cV$. In general, cooperative evolution can be described as any stochastic process satisfying the condition $(X_t, X_{t+1}) \in \overline{\cE}$. However, to understand the connection between the average path integral \eqref{value2} and the Poisson's equation, we now focus on the processes whose governing law $\cP$ belongs to an important class in probability theory, known as the class of reversible Markov chains. 

A Markov chain $(X_t)_{t \in \N_0}$ is characterized by its transition probabilities (or rates) between two adjacent states $S$ and $T$, denoted by $p_{S,T}$, which is the probability of $X_{t+1} = T$ given $X_t = S$. A Markov chain on a graph $\cG$ is called  reversible if there exists an edge weight $\la : \overline{\cE} \to \R_+$ such that the transition probabilities are defined by
 \begin{align}\label{MC2}
p_{S,T} = \frac{\la (S,T)} { \sum_{U \sim S} \la (S,U)}.
\end{align} 
\eqref{MC2} describes a (biased) random walk on the graph, where the parameter $\lambda$ indicates the relative likelihood of the direction in which cooperation is likely to progress, thereby providing increased flexibility in modeling stochastic cooperative processes. This formulation includes the previous coalition formation process \eqref{MC} as a particular case, where $\lambda$ remains constant at 1 and $\mathcal{G}$ represents the coalition graph \eqref{oldG}.

If a Markov chain is reversible, there exists a stationary distribution $\pi=(\pi_S)_{S \in \cV}$ such that $\pi_S p_{S,T} = \pi_T p_{T,S}$ for all $ S, T \in \cV$. Another implication of reversibility is that every loop and its inverse loop have the same probability of being realized, that is,
\be\label{reversibility}
p_{S,S_1} p_{S_1,S_2} \dots  p_{S_{n-1},S_n} p_{S_n, S} =p_{S,S_n} p_{S_n,S_{n-1}} \dots  p_{S_2,S_1} p_{S_1, S}.
\ee
We illustrate the importance of these properties in establishing the following result.
\begin{theorem}\label{main2}
Let the Markov chain \eqref{MC2} be defined on a  connected graph $G$ with weight $\la$, and let $f_i \in \ell^2_\la(\cE)$. Then $\Phi_{f_i}$ in \eqref{value2}  uniquely solves the Poisson's equation
\be\label{ls3}
\mathrm{L}\Phi_{f_i} = \mathrm{d}^* f_i \ \ \text{with the initial condition } \ \Phi_{f_i} (O) = 0.
\ee
\end{theorem}
\begin{proof}%[Proof of Theorem \ref{main2}]
Fix $T \in \cV$ and let $\{T_1,...,T_n\}$ be the set of all states adjacent to $T$ (which is nonempty by the connectedness of $\cG$), and set $\La_T= \sum_{k=1}^n \la (T, T_k) > 0 $. 

By \eqref{div} and \eqref{MC2}, for any $f_i \in \ell_\la^2(\cE)$, we have
   \begin{align}
 \label{divf}
\mathrm{d}^*{f_i} (T) / \La_T &= \sum_{k=1}^n p_{T, T_k}  {f_i}(T_k,T), \text{ and} \\
\label{divV}
\mathrm{L} \Phi_{f_i} (T) / \La_T &=  \sum_{k=1}^n p_{T, T_k}\big(\Phi_{f_i}(T) - \Phi_{f_i}(T_k)\big) =  \sum_{k=1}^n p_{T, T_k} \Phi_{f_i}^{T_k} ( T) 
\end{align}
where the last equality is from \ref{transition} which will be shown later. Now we can interpret the right side of \eqref{divV} as the aggregation \eqref{value2} of path integrals of $f_i$ \eqref{pathintegral2} along all loops beginning and ending at $T$, but in this aggregation of $f_i$ we do not take into account the first move from $T$ to $T_k$, since this first move is described by the transition rate $p_{T, T_k}$ and not driven by $\Phi^{T_k}_{f_i}$. On the other hand, if we aggregate path integrals of $f_i$ for all loops emanating from $T$, we get $0$ due to the reversibility \eqref{reversibility} and the sign changing property of $f_i$. This observation allows us to conclude:
\begin{align*}
0 &= \text{aggregation of path integrals of $f_i$ along all loops emanating from $T$} \\
&= \text{aggregation of path integrals of $f_i$ along all loops except the first moves} \\
&\q + \text{aggregation of path integrals of $f_i$ for all first moves from $T$}\\
&= \sum_{k=1}^n p_{T, T_k} \Phi_{f_i}^{T_k} ( T) +  \sum_{k=1}^n p_{T, T_k} f_i(T,T_k) \\
&= \mathrm{L} \Phi_{f_i} (T) / \La_T - \mathrm{d}^*f_i (T) / \La_T,
\end{align*}
yielding $\mathrm{L} \Phi_{f_i} (T) = \mathrm{d}^*f_i (T)$.
\end{proof}
The theorem allows us to evaluate the potentially intractable value function $\Phi_{f_i}$ by a feasible problem of solving a system of least-squares linear equations \eqref{ls3}. 
\begin{example} We calculate the allocation \eqref{value2} for the $\al$-Shapley value \eqref{modifiedmarginalvalue} with the glove game $v$ \eqref{ex:introGlove}. By Theorem \ref{main2}, we can solve the Poisson's equation
\be
\mathrm{L} \Phi_{f_{\al, i}} = \mathrm{d} ^\ast f_{\al, i} \ \text{ with initial condition }\ \Phi_{f_{\al, i}} (\emptyset) = 0, \q i=1,2,3. 
\ee
  Let us denote the vertices of the unit cube by $n_0 = (0,0,0)$, $n_1 = (1,0,0)$, $n_2 = (0,1,0)$, $n_3 = (0,0,1)$, $n_4 = (1,1,0)$, $n_5 = (1,0,1)$, $n_6 = (0,1,1)$, $n_7 = (1,1,1)$. Then the matrix representation of the gradient $\mathrm{d}$ and the marginal values $f_{\al, 1},f_{\al, 2},f_{\al, 3}$ are given by
%\begin{singlespace}
%\begin{onehalfspace}
\[
 \kbordermatrix{ 
& n_0 & n_1 & n_2 & n_3 & n_4 & n_5 & n_6 & n_7 \\
(n_0, n_1) & -1 & 1 & 0 & 0 & 0 & 0 &0 &0\\
(n_0, n_2) & -1 & 0 & 1 & 0 & 0 & 0 & 0 &0\\
(n_0, n_3) & -1 & 0 & 0 & 1 & 0 & 0 & 0 &0\\
(n_1, n_4) & 0 & -1 & 0 & 0 & 1 & 0 & 0 &0\\
(n_2, n_4) & 0 & 0 & -1 & 0 & 1 & 0 & 0 &0\\
(n_1, n_5) & 0 & -1 & 0 & 0 & 0 & 1 & 0 &0\\
(n_3, n_5) & 0 & 0 & 0 & -1 & 0 & 1 & 0 &0\\
(n_2, n_6) & 0 & 0 & -1 & 0 & 0 & 0 & 1 &0\\
(n_3, n_6) & 0 & 0 & 0 & -1 & 0 & 0 & 1 &0\\
(n_4, n_7) & 0 & 0 & 0 & 0 & -1 & 0 & 0 &1\\
(n_5, n_7) & 0 & 0 & 0 & 0 & 0 & -1 & 0 &1\\
(n_6, n_7) & 0 & 0 & 0 & 0 & 0 & 0 & -1 &1
}
, \q 
 \kbordermatrix{ 
& f_{\al, 1} & f_{\al, 2} & f_{\al, 3} \\
(n_0, n_1) & 0 & 0 & 0\\
(n_0, n_2) & 0 & 0 & 0\\
(n_0, n_3) & 0 & 0 & 0\\
(n_1, n_4) & \frac{1-\al}{2} & \al & \frac{1-\al}{2}\\
(n_2, n_4) & \al & \frac{1-\al}{2} & \frac{1-\al}{2}\\
(n_1, n_5) & \frac{1-\al}{2} & \frac{1-\al}{2} & \al\\
(n_3, n_5) & \al & \frac{1-\al}{2} & \frac{1-\al}{2}\\
(n_2, n_6) & 0 & 0 & 0\\
(n_3, n_6) & 0 & 0 & 0\\
(n_4, n_7) & 0 & 0 & 0\\
(n_5, n_7) & 0 & 0 & 0\\
(n_6, n_7) & \al & \frac{1-\al}{2} & \frac{1-\al}{2}
}.
\]
%\end{singlespace}
%\end{onehalfspace}
%\vspace{3mm}

\noin Then $\mathrm{d} ^\ast$ is represented by the transpose matrix of $\mathrm{d}$ since the edge weight $\la$ is constant $1$. In view of the initial condition, we need to solve $\mathrm{L}_0 u_i = \mathrm{d}^\ast f_{\al, i}$, where $\mathrm{L}_0$ is a $8 \times 7$ matrix equal to $\mathrm{L}$ with the first column removed; then $u_i \in \R^7$ coincides with $\Phi_{f_{\al, i}}$ for each nonempty $S \subset \{1,2,3\}$. Since $u_i$ is unique, it is represented by
\be\label{wformula}
u_i = (\mathrm{L}_0^\ast \mathrm{L}_0)^{-1} \mathrm{L}_0^\ast \mathrm{d}^\ast f_{\al, i}, \q i=1,2,3.
\ee
The extended $\al$-Shapley value allocation table \eqref{alphashapley} is obtained by solving \eqref{wformula}.
\end{example}

\begin{remark}[Stern and Tettenhorst's component games \cite{stern2019hodge}] Given a coalition game $v \in \cG_N$, Stern and Tettenhorst analyzed the Poisson's equation \eqref{Poisson3} on the coalition graph \eqref{oldG} with $\p_i v$ representing player $i$'s marginal value. Specifically, they defined the component games $(v_i)_{i \in N}$ as the solution to the equation
\be\label{StTePoisson}
\mathrm{L} v_i = \mathrm{d}^* \p_i v \ \text{ with } v_i(\es) = 0, \ \text{ for each } i \in N.
\ee
Stern and Tettenhorst utilized this specific Poisson's equation as the basis of their theory. In this study, we presented a microfoundation of the component games $(v_i)_{i \in N}$ by showing $v_i (S) = \Phi_i(v, S)$ for all $i \in N$ and $S  \subset N$, where $\Phi_i$ was defined by \eqref{value}. This was demonstrated in Theorem \ref{main2} in a much broader context, i.e., for any network $\cG$ and players' marginal values $(f_i)_{i \in N}$. Additionally, \citet{stern2019hodge}'s main result establishes $v_i(N) = \phi_i(v)$, that is, each player's component game value at the grand coalition coincides with the Shapley value. This result can now be interpreted as a particular case of Theorems \ref{coincide1} and \ref{main2}. In particular, these theorems enable us to utilize the equation \eqref{StTePoisson} for the proof of Theorem \ref{main}, as presented in Section \ref{proofs}.
\end{remark}

\begin{remark}[From infinite to finite paths]\label{infinitetofinite}
The definition of the value  $\Phi_{f_i}^S (T)$ in \eqref{value2} involves the sum of an infinite number of path integrals from $S$ to $T$, even when the underlying graph has a finite number of nodes and edges. This is because we allow paths to contain loops. However, if two essential conditions are met -- the sign changing property of edge flows $\ell^2_\la(\cE)$ and reversibility \eqref{reversibility} -- we can effectively reduce the sum to a finite number of appropriately weighted paths with no loops. %For the sake of simplicity, let us consider the standard graph discussed in Section \ref{Hodgeallocation1}, though the same remark applies to multigraphs as well. 

We say that a path $(X_0, X_1, \dots, X_\tau)$ has no (internal) loop if $X_0,...,X_\tau$ are all distinct, with the exception of the possibility $X_0 = X_\tau$, i.e., the path itself can be a loop. If a graph $G = (\cV, \cE)$ is finite, then there are finitely many paths with no loops. Let $\cN_{S,T}$ represent the collection of such no-loop paths from $S$ to $T$ (where $S=T$ is possible). For instance, in Figure \ref{figure4}, $\cN_{S,T}$ consists of the following three paths: $(S, T)$, $(S, V, U, T)$, $(S, V, W, U, T)$. Let $\cC_{S,T}$ be the collection of all paths from $S$ to $T$, i.e., $X_0 = S$, $X_\tau = T$, and $X_t \neq T$ for all $ t = 1,2,..., \tau -1$. We now assert that the following equality holds:
\be\label{reduction}
\Phi_{f_i}^S (T) = \sum_{\tilde\omega = (X_0,...,X_\tau) \in \cN_{S,T}} \mu(\tilde \omega) \sum_{t=1}^{\tau} f_i (X_{t-1}, X_t)
\ee
for some probability  $\mu$ on $\cN_{S,T}$, i.e., $\sum_{\tilde\omega \in \cN_{S,T}} \mu(\tilde \omega) = 1$ with $\mu(\tilde \omega) > 0$ for every $\tilde\omega$.

What probability $\mu$ will make \eqref{reduction} hold? Let us say $\omega \in \cC_{S,T}$ and $\tilde \omega \in \cN_{S,T}$ are equivalent, denoted by $\omega \sim \tilde \omega$, meaning that if all internal loops in $\omega$ are removed, it equals $\tilde \omega$.  For example, in Figure \ref{figure4}, the path $(S,V, W, U, V, W, U, T)$ is equivalent to $(S, V, W, U, T)$. Observe that $\sim$ yields a partition of $\cC_{S,T}$, with each $\tilde \omega \in \cN_{S,T}$ representing a partition. We define the probability distribution $\mu$ on $\cN_{S,T}$ as follows:
\be\label{weightassignment}
\mu(\tilde \omega) = \cP(\{ \omega \ | \ \omega \sim \tilde \omega\}) = \int_{\{ \omega\, |\, \omega \sim \tilde \omega\}} d \cP(\omega)
\ee
where $\cP$ is the law of the Markov chain, so for a path $\omega = (X_0,X_1,...,X_\tau)$, we have $d \cP(\omega) = \prod_{t=1}^\tau p_{X_{t-1},X_t} = p_{X_0,X_1}p_{X_1,X_2} \dots p_{X_{\tau-1} X_\tau}$. 

For example, consider the general two person game in Example \ref{twoperson} with initial and terminal states $S=\emptyset$ and $T=\{1\}$. There are two no-loop paths: $\tilde \omega_1 = (\emptyset, \{1\})$ and $\tilde \omega_2 = (\emptyset, \{2\}, \{1,2\}, \{1\})$. If we assign $1/2$ to $\tilde \omega_1$ and $(1/2)^3 = 1/8$ to $\tilde \omega_2$ according to their lengths, it fails to sum to $1$, and it will not yield a proper weight assignment. 

Let $\{X_t\}_{t \ge 0}$ denote the random walk \eqref{MC} with $X_0 = \emptyset$. With probability $1/2$, $X_1 = \{1\}$ or $ \{2\}$. In the latter case $X_1 = \{2\}$, due to the graph symmetry, $X_t$ will eventually arrive at $\{1\}$ via $\emptyset$ or $\{1,2\}$ with equal probability $1/2$. This and \eqref{weightassignment} show $\mu(\tilde \omega_1) = 1/2 + 1/2\cdot 1/2 = 3/4$ and $\mu(\tilde \omega_2) = 1/2\cdot 1/2 = 1/4$. Now the path integral of $\p_1 v $ along $\tilde \omega_1$ is $v_1$, while along $\tilde \omega_2$ is $v_{12}-v_2$, resulting in player $1$'s value at $\{1\}$ as $3/4 \cdot v_1 + 1/4 \cdot (v_{12}-v_2) = (3v_1 + v_{12} - v_2)/4$. Similarly, the path integral of $\p_2 v$ along $\tilde \omega_1$ is $0$, while along $\tilde \omega_2$ is $v_2 + (v_1 - v_{12})$, yielding player $2$'s value at $\{1\}$ as $(v_2 + v_1 - v_{12})/4$. This coincides with \eqref{twopersontable}, verifying \eqref{reduction} in this case. For establishing \eqref{reduction} in general, the sign changing property of edge flows and the reversibility of the Markov chain appear crucial, which implies that the path integral of an edge flow along a loop and its reverse loop must have the opposite sign while having the same probability of the loops being realized, and thus cancel out in the sum \eqref{reduction}. If, on the other hand, at least one of the two conditions fails, the author does not expect the reduction \eqref{reduction} to hold in general. This further signifies the importance of the two conditions, and their interplay.
\end{remark}

\section{Conclusion}\label{Conclusion}

This study aims to present a novel mathematical framework for cooperative games that goes beyond existing setups in the literature. Previous research on cooperative value allocation has primarily considered it as determined by a number of axioms. Theorem \ref{main} can be interpreted as a continuation of this direction. However, the primary objective of this study is to propose a shift from the current axiomatic framework to a stochastic path-integral framework. While the scope of cooperative games whose value can be determined by the axiomatic framework may be limited, often necessitating a restrictive set of assumptions for valuation, and the resulting value is implicit, as evidenced in this paper, the modeling of players' cooperative progress as a random process in cooperative graphs and the definition of value as the average path integral of players' marginal values allows for the seamless incorporation of a broad range of cooperative graphs, progressions, games, and marginal values. Moreover, the value is explicitly represented in this framework. We believe that this change in perspective will enable the application of cooperative game theory to various scientific domains, particularly in machine learning and AI, where the classical Shapley value already holds significance. In the realm of economic applications, as alluded in Remark \ref{bettersetup}, investigating the specification of the game network $\mathcal{G}$ and cooperative process law $\mathcal{P}$, as well as establishing players' marginal values $\bf f$ in a manner tailored to the specific economic context at hand, presents an intriguing avenue for future research.

\section{Proof of Theorem \ref{main} and Equation \ref{transition}}\label{proofs}

\begin{proof}[Proof of Theorem \ref{main}] 
First, we claim that A1--A5 determines the operator $\Phi$ uniquely (if exists). For each player set $N$, define games $\delta_{S,N} \in \cG_N$ for each $ \es \neq S \subset N$ by
\[
\delta_{S,N}(S) = 1, \q 
      \delta_{S,N}(T)= 0 \,\text{ if } \, T \neq S.
\]
We proceed by an induction on $|N|$. The case $|N|=1$ is already from A1. Suppose the claim holds for $|N|-1$, so $\Phi_j(\delta_{S, N \setminus \{i\}}, \cdot)$ are determined for all $i,j \in N$ and $\es \neq S \subset N \setminus \{i\}$. Define the games $\Delta_{(S, S \cup \{i\})} \in \cG_N$ for each $\es \neq S \subset N \setminus \{i\}$ by 
\[
\Delta_{(S, S \cup \{i\})}(T) = 1 \, \text{ if } \, T=S \text{ or } T=S \cup \{i\}, \q \Delta_{(S, S \cup \{i\})}(T) =0 \, \text{ otherwise.}
\]
Notice then A4 (and induction hypothesis) determines $\Phi$ for all $\Delta_{(S, S \cup \{i\})} \in \cG_N$. Then thanks to A2, to prove the claim, it is enough to show that A1--A5 can determine $\Phi$ for the pure bargaining game $\delta:= \delta_{N,N}$, because for any $\es \neq S \subset N$, we can write $\delta_{S,N}$ as the following sign-alternating sum
\be
\delta_{S,N} = \Delta_{(S, S \cup \{i_1\})} - \Delta_{(S \cup \{i_1\}, S \cup \{i_1, i_2\})} + \Delta_{(S \cup \{i_1, i_2\}, S \cup \{i_1, i_2, i_3\})} - \dots \pm \delta_{N,N}. \nn
\ee
By A3, $\sum_{S \subset N} \Phi_i(\delta,S)$ is constant for all $i \in N$, thus equals $ 1/|N|$ by A1. Define 
\[
u_i (S) := \Phi_i(\delta,S) - \frac{1}{|N| 2^{|N|}} \ \text{ for all } \ S \subset N
\]
so that $u_i (\emptyset) = -\frac{1}{|N| 2^{|N|}}$ and $\sum_{S \subset N} u_i(S)=0$ for all $i$. Now observe A5 implies:
\be
u_i(S) + u_i(S \cup \{i\}) \text{ is constant for all } S \subset N \setminus \{i\}, \text{ hence it is zero.} \nn
\ee
This determines $u_i$, thus $\Phi_i(\delta, \cdot)$, as follows: suppose $u_i(S)$ has been determined for all $i$ and $|S| \le k-1$. Let $|T| =k \le |N|-1$. Then we have $u_i(T) = - u_i (T \setminus \{i\})$ for all $i \in T$ and it is constant (say $c_k$) by A3. Using A1 and A3, we obtain
\be
0 = \delta(T)=\sum_{i\in N}\Phi_i(\delta, T)=\sum_{i\in N}\bigg(u_i(T) + \frac{1}{|N| 2^{|N|}} \bigg), \nn
\ee
yielding $\sum_{i\in N} u_i(T) = -1/ 2^{|N|}$. With $u_i(T) = c_k$ for all $i \in T$, we deduce $u_j(T) =  \frac{ -1 - 2^{|N|} k c_k } {2^{|N|}(|N|-k) }$ for all $j \notin T$. This shows $u_i(T)$ is determined for all $|T|=k \le |N|-1$. Of course, $\Phi_i(\delta,N) = 1/|N|$ for all $i \in N$ by A1 and A3. By induction (on $|N|$ and on $k$ for each $N$), the proof of uniqueness of the operator $\Phi$ is therefore complete.

It remains to show that $\Phi$ represented by the average path integral \eqref{value} satisfies A1--A5. As remarked earlier, Theorems \ref{coincide1} and \ref{main2} show that $v_i (S) = \Phi_i(v, S)$ for all $S \subset N$, where $(v_i)_{i \in N}$ is the unique solution to the Poisson's equation
\be\label{ls1}
\mathrm{L} v_i = \mathrm{d}^* \p_i v \ \text{ with } v_i(\es) = 0, \ \text{ for each } i \in N.
\ee
We will henceforth show that $( v_i )_{ i \in N }$ in place of $(\Phi_i(v, \cdot))_{i \in N}$ satisfies A1--A5. Firstly, A2 is clearly satisfied. To show that A1 is satisfied, we compute 
  \begin{equation*}
    \mathrm{L} \sum _{ i \in N } v _i = \sum _{ i \in N}  \mathrm{L} v _i = \sum _{ i \in N }   \mathrm{d}^\ast \p_i v =   \mathrm{d}^\ast \sum _{ i \in N } \p_i v =   \mathrm{d}^\ast \mathrm{d} v = \mathrm{L}v,
  \end{equation*}
since $ \mathrm{d} = \sum _{ i \in N } \p_i $. Hence by unique solvability of \eqref{ls1}, $\sum _{ i \in N } v _i =v$ as desired. 

Next, let $\sigma$ be a permutation of $N$. Let $\sigma$ act on $\ell ^2 (2^{N})$ and $\ell ^2 (\cE)$ via
\be
\sigma v(S) = v(\sigma (S)) \ \text{and} \ \sigma f\bigl( S , S \cup \{ i \}\bigr) =  f\bigl( \sigma (S) , \sigma (S \cup \{ i \})\bigr),  \ v \in \ell ^2 (2^{N}), \ f \in \ell ^2 (\cE).  \nn
\ee
It is easy to check $ \mathrm{d} \sigma = \sigma \mathrm{d} $ and
  $ \mathrm{d} _i \sigma  = \sigma \mathrm{d} _{ \sigma (i)
  } $. We also have $\mathrm{d}^\ast \sigma = \sigma \mathrm{d}^\ast$, since 
\be
  \langle v , \mathrm{d}^\ast \sigma f \rangle =    \langle \mathrm{d} v ,  \sigma f \rangle =  \langle \sigma^{-1} \mathrm{d} v ,   f \rangle =    \langle  \mathrm{d}  \sigma^{-1}  v ,   f \rangle =  \langle   \sigma^{-1}  v ,  \mathrm{d}^\ast f \rangle=   \langle   v ,  \sigma  \mathrm{d}^\ast f \rangle \nn
\ee
for any $v \in \ell ^2 (2^{N})$, $f \in \ell ^2 (\cE)$. Now let $\sigma$ be the transposition of $i,j$. We have
\be
   \mathrm{L}(\sigma v)_i = \mathrm{d}^\ast \p_i \sigma v = \mathrm{d}^\ast  \sigma  \p_j v =   \sigma \mathrm{d}^\ast \p_j v = \sigma  \mathrm{L} v_j =  \mathrm{L}\sigma v_j \nn
\ee
which shows $(\sigma v)_i = \sigma v_j$ by the unique solvability. Notice this corresponds to A3.
  
For A4, let $v \in \cG_N$, $i \in N$, and assume $\p_i v = 0$. Then from \eqref{ls1} we readily get $v_i \equiv 0$. Fix $j \neq i$, and let $\mathrm{\tilde d}$, ${\tilde \p}_j$ be the differential operators restricted on $2^{N \setminus \{i\}}$, and set $\tilde v = v_{-i}$, i.e., $\tilde v$ is the restriction of $v$ on $2^{N \setminus \{i\}}$. Let $\tilde v_j : 2^{N \setminus \{i\}} \to \R$ be the solution to the equation $\mathrm{\tilde d}^\ast \mathrm{\tilde d} \tilde v_j = \mathrm{\tilde d}^\ast {\tilde \p}_j \tilde v$ with $\tilde v_j (\es) = 0$. Finally, in view of A4, define $v_j \in \cG_N$ by $v_j = \tilde v_j$ on $2^{N \setminus \{i\} }$ and $\p_i v_j = 0$. Now observe that A4 will follow if we can verify that this $v_j$ indeed solves the equation $\mathrm{ d}^\ast \mathrm{d}  v_j = \mathrm{ d}^\ast \p_j  v$.

To show this, let $S \subset N \setminus \{i\}$. In fact the following string of equalities holds:
\be
\mathrm{d}^\ast \mathrm{d} v_j (S \cup \{i\}) 
=\mathrm{d}^\ast \mathrm{d} v_j (S) 
=\mathrm{\tilde d}^\ast \mathrm{\tilde d} \tilde v_j (S) 
=\mathrm{\tilde d}^\ast {\tilde \p}_j \tilde v (S) 
=\mathrm{d}^\ast \p_j  v (S) 
=\mathrm{d}^\ast \p_j  v (S \cup \{i\})
\nn
\ee
which simply follows from the definition of the differential operators. For instance
\begin{align*}
\mathrm{d}^\ast \mathrm{d} v_j (S)   
= \sum _{ T \sim S } \mathrm{d} v_j ( T, S ) 
= \sum _{ T \sim S, \, T \neq S \cup \{i\} } \mathrm{d} v_j ( T, S )
=\mathrm{\tilde d}^\ast \mathrm{\tilde d} \tilde v_j (S)
\end{align*}
where the second equality is due to $\p_i v_j = 0$. On the other hand, since $j \neq i$,
\begin{align*}
\mathrm{d}^\ast \p_j v (S)   
= \sum _{ T \sim S } \p_j v ( T, S ) 
=  \sum _{ T \sim S } {\tilde \p}_j \tilde v ( T, S ) 
=\mathrm{\tilde d}^\ast {\tilde \p}_j \tilde v (S).
\end{align*}
The first equality is due to the definition of $v_j$ (i.e. $v_j = \tilde v_j$ on $2^{N \setminus \{i\} }$ and $\p_i v_j = 0$), and the last equality is due to $\p_i v= 0$. This verifies A4.

Finally, we verify A5. For this, we need to verify the following claim:
\be\label{constancycondition}
v_i (S) + v_i (S \cup \{i\}) \, \text{ is constant over all } \, S \subset N \setminus \{i\}. 
\ee
Let $S \subset N \setminus \{i\}$, and recall $\mathrm{d}^\ast \p_i v (S) = v(S) - v(S \cup \{i\}) = -\mathrm{d}^\ast \p_i v (S \cup \{i\})$. Hence, $\mathrm{L} v_i (S) +\mathrm{L} v_i (S \cup \{i\} ) = 0$. Define $w_i \in \ell^2(2^{N})$ by $w_i (S) = v_i(S \cup \{i\} )$ and $w_i (S \cup \{i\} )= v_i(S  )$ for all $S \subset N \setminus \{i\}$. Then clearly $ \mathrm{L} v_i (S \cup \{i\} ) = \mathrm{L} w_i (S )$ and $\mathrm{L} v_i (S) = \mathrm{L} w_i (S \cup \{i\}  )$. Thus $\mathrm{L} (v_i + w_i) \equiv 0$, hence $v_i + w_i \in  \mathcal{N} ( \mathrm{d} )$, meaning that $v_i + w_i$ is constant. This proves the claim, hence the theorem.
\end{proof}

The following proof of the transition formula \eqref{transition} is required for the proof of Theorem \ref{main2}. The fact Markov chain visits each state infinitely many times is implicitly used. 

 \begin{proof}[Proof of \eqref{transition}] We firstly show   $\Phi_f^S(S) = 0$ which appears as the initial condition in \eqref{ls3}. Then we show $\Phi_f^S(T) = - \Phi_f^T(S)$, and finally $ \Phi^U_f(T) - \Phi^U_f(S) = \Phi_f^S(T)$.
 
To see $\Phi_f^S(S) = 0$, consider a general sample path $\omega$ starting and ending at $S$, without visiting $S$ along the way. In other words, $\omega$ is a loop emanating from $S$. Let $\omega^{-1}$ denote the reversed path of $\omega$, that is, if $\omega$ visits $T_0 \to T_1 \to \dots \to T_\tau$ (where $T_0 = T_\tau = S$ if $\omega$ is a loop), then $\omega^{-1}$ visits $T_\tau \to \dots \to T_0$. Observe
\begin{align*}
0 &= {\cal I}_f^S(S)(\omega) - {\cal I}_f^S(S)(\omega) \\
&= \sum_{t=1}^{\tau_{S}(\omega)} f  \big(X^S_{t-1}(\omega), X^S_t(\omega) \big) - \sum_{t=1}^{\tau_{S}(\omega)} f  \big(X^S_{t-1}(\omega), X^S_t(\omega) \big) \\
&= \sum_{t=1}^{\tau_{S}(\omega)} f  \big(X^S_{t-1}(\omega), X^S_t(\omega) \big) + \sum_{t=1}^{\tau_{S}(\omega)} f  \big(X^S_{t}(\omega), X^S_{t-1}(\omega) \big) \\
&= \sum_{t=1}^{\tau_{S}(\omega)} f  \big(X^S_{t-1}(\omega), X^S_t(\omega) \big) + \sum_{t=1}^{\tau_{S}(\omega^{-1})} f  \big(X^S_{t-1}(\omega^{-1}), X^S_{t}(\omega^{-1}) \big).
\end{align*}
Let $\cP(\omega) = p_{T_0, T_1}p_{T_1, T_2}\dots p_{T_{\tau-1}, T_\tau}$ denote the probability of the sample path $\omega$ being realized. Reversibility \eqref{reversibility} implies $\cP(\omega) = \cP(\omega^{-1})$ for any loop $\omega$. And there is an obvious one-to-one correspondence between a loop $\omega$ and its reverse $\omega^{-1}$. This implies
\begin{align*}
0 
&= \int_\omega \sum_{t=1}^{\tau_{S}(\omega)} f  \big(X^S_{t-1}(\omega), X^S_t(\omega) \big) d \cP(\omega) + \int_\omega \sum_{t=1}^{\tau_{S}(\omega^{-1})} f  \big(X^S_{t-1}(\omega^{-1}), X^S_t(\omega^{-1}) \big)  d \cP(\omega) \\
&= \int_\omega \sum_{t=1}^{\tau_{S}(\omega)} f  \big(X^S_{t-1}(\omega), X^S_t(\omega) \big) d \cP(\omega) + \int_{\omega} \sum_{t=1}^{\tau_{S}(\omega^{-1})} f  \big(X^S_{t-1}(\omega^{-1}), X^S_t(\omega^{-1}) \big)  d \cP(\omega^{-1}) \\
&= \Phi_f^S(S) + \Phi_f^S(S)
\end{align*}
yielding $\Phi_f^S(S) = 0$ as desired.

Next, we will show $\Phi_f^S(T) = - \Phi_f^T(S)$ for $S \ne T$. Consider a general finite sample path $\omega$ of the Markov chain \eqref{MC} starting at $S$, visiting $T$, then returning to $S$ (this happens with probability 1). We can split this journey into four subpaths:\\
$\omega_1$: the path returns to $S$ $m \in \N \cup \{0\}$ times without visiting $T$,\\
$\omega_2$: the path begins at $S$ and ends at $T$ without returning to $S$,\\
$\omega_3$: the path returns to $T$ $n \in \N  \cup \{0\}$ times without visiting $S$,\\
$\omega_4$: the path begins at $T$ and ends at $S$ without returning to $T$.\\
\begin{figure}
\centering
\begin{subfigure}{0.8\textwidth}
  \centering
  \includegraphics[width=0.9\linewidth]{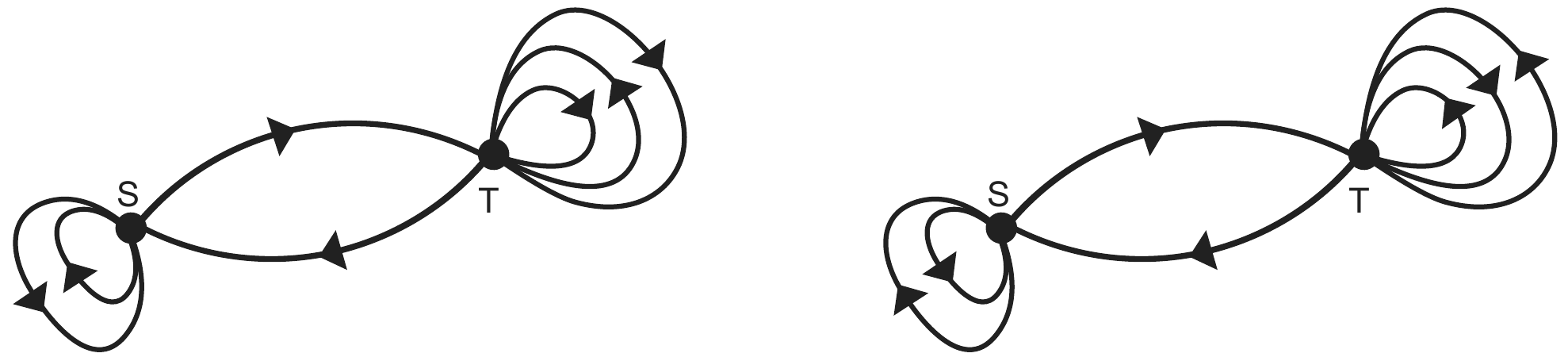}
  %\caption{A subfigure}
 % \label{fig:sub1}
\end{subfigure}%

\caption{$\omega = \omega_1 \circ \omega_2  \circ \omega_3 \circ \omega_4$ and its pair $\omega' = \omega^{-1}_1  \circ \omega_2  \circ \omega_3^{-1}  \circ \omega_4$.\\ By reversibility, they have the same probability of being realized.%, i.e., $\cP(\omega) = \cP(\omega')$.
}
\label{figure6}
\end{figure}
Thus $\omega = \omega_1 \circ \omega_2  \circ \omega_3 \circ \omega_4$ is the concatenation of the $\omega_i$'s, and the probability $\cP(\omega)$ of this finite sample path being realized satisfies $\cP(\omega) = \cP(\omega_1)\cP(\omega_2)\cP(\omega_3)\cP(\omega_4)$.

Define a pairing $\omega'$ of $\omega$ by $\omega' := \omega^{-1}_1  \circ \omega_2  \circ \omega_3^{-1}  \circ \omega_4$. This is another general sample path starting at $S$, visiting $T$, then returning to $S$. Then we have $\cP(\omega) = \cP(\omega')$ because $\cP(\omega_1) = \cP(\omega^{-1}_1)$ and $\cP(\omega_3) = \cP(\omega^{-1}_3)$ by \eqref{reversibility}, and moreover,
\be
{\cal I}_f^S(T)(\omega) + {\cal I}_f^S(T)(\omega') = 2 \sum_{t=1}^{\tau_{T}(\omega_2)} f  \big(X^S_{t-1}(\omega_2), X^S_t(\omega_2) \big), \nn
\ee
because the loops $\omega_1$ and $\omega^{-1}_1$ aggregate $f$ with opposite signs, hence they cancel out in the above sum. Now consider $\tilde \omega := \omega_3 \circ \omega_2^{-1} \circ \omega_1 \circ \omega_4^{-1}$ and $\tilde \omega' := \omega_3^{-1} \circ \omega_2^{-1} \circ \omega_1^{-1} \circ \omega_4^{-1}$. $(\tilde \omega, \tilde \omega')$ represents a pair of general sample paths starting at $T$, visiting $S$, then returning to $T$. We then deduce
\begin{align*}
{\cal I}_f^T(S)(\tilde \omega) + {\cal I}_f^T(S)(\tilde \omega') &= 2 \sum_{t=1}^{\tau_{S}(\omega_2^{-1})} f  \big(X^T_{t-1}(\omega_2^{-1}), X^T_t(\omega_2^{-1}) \big)  \\
&= -2 \sum_{t=1}^{\tau_{T}(\omega_2)} f \big(X^S_{t-1}(\omega_2), X^S_t(\omega_2) \big)  \\
&= - \big({\cal I}_f^S(T)(\omega) + {\cal I}_f^S(T)(\omega')\big)
\end{align*}
because $f(U,V)= -f(V,U)$ for any edge $(U,V)$. Due to the one-to-one correspondence between the paths $\omega, \omega',\tilde \omega, \tilde \omega'$, and $\cP(\omega) = \cP(\omega') = \cP(\tilde \omega) = \cP(\tilde \omega')$ from  \eqref{reversibility}, the desired identity $\Phi_f^S(T) = - \Phi_f^T(S)$ now follows by integration:
\begin{align*}
\int_\omega \big[ {\cal I}_f^T(S)(\tilde \omega) + {\cal I}_f^T(S)(\tilde \omega') \big] d\cP(\omega)  
&= \int_{ \omega} {\cal I}_f^T(S)(\tilde \omega) d\cP(\tilde \omega) + \int_{ \omega} {\cal I}_f^T(S)(\tilde \omega') d\cP(\tilde \omega')\\
&= 2 \Phi_f^T(S),
\end{align*}
and similarly, 
\begin{align*}
\int_\omega \big[ {\cal I}_f^S(T)(\omega) + {\cal I}_f^S(T)(\omega') \big] d\cP(\omega) =  2 \Phi_f^S(T).
\end{align*}

Finally, to show $ \Phi_f^U(T) - \Phi_f^U (S) = \Phi^S_f(T)$ for distinct $S,T,U$, we proceed 
\begin{align*}
&{\cal I}_f^U(T) - {\cal I}_f^U (S) = \sum_{t=1}^{\tau_{T}} f \big( X^U_{t-1}, X^U_t \big) 
-  \sum_{t=1}^{\tau_{S}} f  \big( X^U_{t-1}, X^U_t \big) \\
&= {\bf 1}_{\tau_{S} < \tau_{T}}\sum_{t=\tau_{S} + 1}^{\tau_{T}} f  \big( X^U_{t-1}, X^U_t \big) 
- {\bf 1}_{\tau_{T} < \tau_{S}}\sum_{t=\tau_{T} + 1}^{\tau_{S}} f \big( X^U_{t-1}, X^U_t \big).
\end{align*}
By taking expectation, we obtain the following via the Markov property
\begin{align*}
\E[{\cal I}_f^U(T)]  - \E[{\cal I}_f^U (S)] &= \cP(\{\tau_{S} < \tau_{T} \}) \Phi^S_f(T) 
- \cP(\{\tau_{T} < \tau_{S}\}) \Phi^T_f(S) \\
&=\big( \cP(\{\tau_{S} < \tau_{T} \}) \Phi^S_f(T) 
+ \cP(\{\tau_{T} < \tau_{S}\})\big) \Phi^S_f(T) = \Phi^S_f(T),
\end{align*}
which proves the transition formula  $ \Phi_f^U(T) - \Phi_f^U (S) = \Phi^S_f(T)$.
\end{proof}

\noin {\bf Author's statement:} This study does not involve any data. The author declares that he does not have any conflict-of-interest concerning this manuscript or its contents.

\bibliographystyle{plainnat}
\bibliography{TLbib}

\end{document}